\theoremstyle{plain}
\newtheorem{theo}{Theorem}[section]
\newtheorem{coro}[theo]{Corollary}
\newtheorem{prop}[theo]{Proposition}
\newtheorem{rema}[theo]{Remark}
\newtheorem*{theoA}{Theorem A}
\newtheorem*{theoB}{Theorem B}
\newcommand{\Qp}{\mathbf{Q}_p}
\newcommand{\ZZ}{\mathbf{Z}}
\newcommand{\QQ}{\mathbf{Q}}
\newcommand{\BB}{\mathbf{B}}
\newcommand{\OO}{\mathcal{O}}
\newcommand{\Qpbar}{\overline{\QQ}_p}
\newcommand{\Qpnrhat}{\widehat{\QQ}_p^{\mathrm{nr}}}
\newcommand{\bcris}{\mathbf{B}_{\mathrm{cris}}} 
\newcommand{\be}{\mathbf{B}_{\mathrm{e}}} 
\newcommand{\bee}{\mathbf{B}_{\mathrm{e},E}} 
\newcommand{\bdr}{\mathbf{B}_{\mathrm{dR}}}  
\newcommand{\Hom}{\operatorname{Hom}}
\newcommand{\Emb}{\Sigma}
\newcommand{\End}{\operatorname{End}}
\newcommand{\Ext}{\operatorname{Ext}}
\newcommand{\Tr}{\operatorname{Tr}}
\newcommand{\Nm}{\operatorname{N}}
\newcommand{\GL}{\operatorname{GL}}
\newcommand{\Gal}{\operatorname{Gal}}
\newcommand{\Frac}{\operatorname{Frac}}
\newcommand{\dr}{\mathrm{dR}}
\newcommand{\Id}{\operatorname{Id}}
\newcommand{\nbf}{\mathbf{n}}
\renewcommand{\phi}{\varphi}
\renewcommand{\geq}{\geqslant}
\renewcommand{\leq}{\leqslant} 
\title[Triangulable tensor products of $B$-pairs]{On triangulable tensor products of $B$-pairs and trianguline representations}
\author{Laurent Berger}
\address{Laurent Berger \\ 
UMPA de l'ENS de Lyon \\
UMR 5669 du CNRS}
\email{laurent.berger@ens-lyon.fr}
\urladdr{perso.ens-lyon.fr/laurent.berger/}
\author{Giovanni Di Matteo}
\address{Giovanni Di Matteo}
\email{math@dimatteo.is}
\urladdr{dimatteo.is/Mathematics}
\date{\today}
\begin{document}

\begin{abstract}
We show that if $V$ and $V'$ are two $p$-adic representations of $\operatorname{Gal}(\overline{\mathbf{Q}}_p/\mathbf{Q}_p)$ whose tensor product is trianguline, then $V$ and $V'$ are both potentially trianguline.
\end{abstract}

\subjclass[2010]{11F80 (11F33; 11S15; 11S20; 11S31; 14F30)}

\keywords{Trianguline representation; $B$-pair; $p$-adic Hodge theory; $\mathbf{B}_e$-representation}

\maketitle

\tableofcontents

\setlength{\baselineskip}{18pt}

\section*{Introduction}\label{intro}

The notion of a trianguline representation of $G_{\Qp} = \Gal(\Qpbar/\Qp)$ was introduced by Colmez \cite{PC08} in the context of his work on the $p$-adic local Langlands correspondence for $\GL_2(\Qp)$. Examples of trianguline representations include the semi-stable representations of $G_{\Qp}$ as well as the $p$-adic representations of $G_{\Qp}$ attached to overconvergent cuspidal eigenforms of finite slope (theorem 6.3 of \cite{K03} and proposition 4.3 of \cite{PC08}). The category of all trianguline representations of $G_{\Qp}$ is stable under extensions, tensor products, and duals. We refer the reader to the book \cite{BC09} and the survey \cite{LB11} for a detailed discussion of trianguline representations. Let us at least mention the following analogue of the Fontaine-Mazur conjecture: if $V$ is an irreducible $2$-dimensional $p$-adic representation of $\Gal(\overline{\QQ}/\QQ)$ that is unramified at $\ell$ for almost all $\ell \neq p$, and whose restriction to a decomposition group at $p$ is trianguline, then $V$ is a twist of the Galois representation attached to an overconvergent cuspidal eigenform of finite slope. This conjecture is a theorem of Emerton (\S 1.2.2 of \cite{ME11}) under additional technical hypothesis on $V$. The trianguline property is in general a condition at $p$ reflecting (conjecturally at least) the fact that a $p$-adic representation comes from a $p$-adic automorphic form. This theme is pursued, for example, in \cite{DH17}, \cite{JB17} and \cite{AC17}.

If $K$ is a finite extension of $\Qp$, we also have the notion of a trianguline representation of $G_K = \Gal(\Qpbar/K)$. We say that a representation $V$ of $G_K$ is potentially trianguline if there exists a finite extension $L/K$ such that the restriction of $V$ to $G_L$ is trianguline. The goal of this article is to prove the following theorem.

\begin{theoA}
\label{theoA}
If $V$ and $V'$ are two non-zero $p$-adic representations of $G_{\Qp}$ whose tensor product is trianguline, then $V$ and $V'$ are both potentially trianguline.
\end{theoA}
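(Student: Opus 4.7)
The plan is to work with $B$-pairs throughout: let $W = W(V)$ and $W' = W(V')$, so the hypothesis is that $W \otimes W'$ is trianguline. I would induct on $d + d'$, where $d = \dim V$ and $d' = \dim V'$.

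\textbf{Base case.} If $d = 1$, then $V$ is a (trianguline) character, and $V' = V^{-1} \otimes (V \otimes V')$ is the tensor product of two trianguline $B$-pairs, hence trianguline (stability under tensor products).

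\textbf{Inductive step, reducible case.} If $W$ admits a proper saturated sub-$B$-pair $0 \subsetneq M \subsetneq W$ (symmetric treatment if $W'$ does), then $M \otimes W'$ is a saturated sub of the trianguline $W \otimes W'$, hence trianguline; similarly the quotient $(W/M) \otimes W'$ is trianguline. By the inductive hypothesis, $M$, $W/M$, and $W'$ are all potentially trianguline, and after passing to a common finite extension $W$ is realized as an extension of trianguline $B$-pairs and is therefore trianguline.

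\textbf{Inductive step, simple case.} Both $W$ and $W'$ are simple $B$-pairs. Let $L \hookrightarrow W \otimes W'$ be the rank-1 saturated sub provided by the triangulation. Adjunction produces non-zero morphisms $\alpha : L \otimes (W')^\vee \to W$ and $\beta : L \otimes W^\vee \to W'$. Simplicity of $W, W'$ forces both images to be the whole target, so $\alpha$ and $\beta$ are surjective; comparing ranks gives $d = d'$, and hence $\alpha, \beta$ are isomorphisms. Consequently $W \otimes W' \cong L \otimes \End(W)$, and twisting by $L^{-1}$ shows $\End(W)$ is trianguline.

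The main obstacle is then the reduced claim: \emph{if $W$ is simple and $\End(W)$ is trianguline, then $W$ is potentially trianguline}. My strategy is as follows. Every rank-1 saturated sub $L_\delta \hookrightarrow \End(W) = W \otimes W^\vee$ corresponds, via adjunction and Schur's lemma for the simple $W$, to an isomorphism $W \otimes \delta \cong W$. The triangulation of $\End(W)$, combined with the observation that the trace-zero quotient $\End(W)/\mathbf{1}$ inherits a triangulation, produces a non-trivial character $\delta$ with $W \otimes \delta \cong W$ (we must rule out that the only rank-1 sub is the canonical identity, which forces $d \geq 2$). Taking determinants of the isomorphism yields $\delta^d = 1$, so $\delta$ has finite order and $\ker \delta$ cuts out a finite Galois extension $L/\Qp$. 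A Clifford-theoretic argument, adapted to the continuous $p$-adic setting, then exhibits $W$ as induced from a character of $G_L$. Hence $W|_{G_L}$ decomposes as a direct sum of characters and in particular is trianguline, so $W$ is potentially trianguline.
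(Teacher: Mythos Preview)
Your overall architecture—reduce to the case where both factors are simple and then study $\End(W)$—matches the paper's. The serious gap is the step ``Taking determinants of the isomorphism yields $\delta^d = 1$.'' The isomorphism you obtain from Schur's lemma is only at the level of $\bee$-representations: a nonzero morphism of $B$-pairs between simple objects of the same rank is an isomorphism on $\bee$-parts but need not match the $\bdr^+$-lattices, so you only get $W_e \otimes \BB(\delta)_e \cong W_e$. Taking determinants then yields $\BB(\delta^d)_e \cong \bee$, i.e.\ the $d$-th power is $\bee$-admissible. When $E \neq \Qp$ (which it must be for a nontrivial triangulation), the unit group $\bee^\times$ is large (Proposition~\ref{beunit}), and a $\bee$-admissible character is merely de Rham with total Hodge--Tate weight zero (Corollary~\ref{beadm}); it need not have finite order. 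So your Clifford step has no finite extension $L$ to land on. The paper overcomes exactly this obstacle by first proving that the triangulation of $\End(W_e)$ splits as a direct sum (Theorem~\ref{bedirsum}), then lifting this splitting to $B$-pairs under a non--de Rham hypothesis (Proposition~\ref{bpairsplit}) and using slopes (Proposition~\ref{sloprop}) to force genuine $B$-pair isomorphisms, after which the determinant trick does give finite order—and even then a further case analysis on de Rham subquotients is required.

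A second structural problem: your induction is on $d+d'$ for representations of $G_{\Qp}$, but the Clifford step (and also the reduction to absolutely simple $W$, which you need so that the $\delta$ extracted from $\End^0(W)$ is genuinely non-trivial—Schur only gives $\End(W_e)^{G_{\Qp}}$ is a finite extension of $E$, not $E$ itself) forces you to restrict to $G_L$ and enlarge $E$. Neither move decreases $d+d'$, so you fall outside your inductive hypothesis. The paper handles this by working from the start with ``completely irreducible'' $\BB^{\otimes E}_{\mid K}$-pairs over variable $K$ and $E$, and proving by contradiction that such an object must already have rank $1$ (Proposition~\ref{tensirr}).
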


We now give more details about the contents of this article. The definition of ``trianguline''  can be given either in terms of $(\phi,\Gamma)$-modules over the Robba ring, or in terms of $B$-pairs. In this article, we use the theory of $B$-pairs, which was introduced in \cite{LB08}. We remark in passing that $B$-pairs are the same as $G_K$-equivariant bundles on the Fargues-Fontaine curve \cite{FF18}. Let $K$ be a finite extension of $\Qp$. Let $\bdr^+$, $\bdr$ and $\be = (\bcris)^{\phi=1}$ be some of Fontaine's rings of $p$-adic periods \cite{F94per}. A $B$-pair is a pair $W=(W_e,W_{\dr}^+)$ where $W_e$ is a free $\be$-module of finite rank endowed with a continuous semi-linear action of $G_K$, and $W_{\dr}^+$ is a $G_K$-stable $\bdr^+$-lattice in $W_{\dr} = \bdr \otimes_{\be} W_e$. If $V$ is a $p$-adic representation of $G_K$, then $W(V) = (\be \otimes_{\Qp} V, \bdr^+ \otimes_{\Qp} V)$ is a $B$-pair. If $E$ is a finite extension of $\Qp$, the definition of $B$-pairs can be extended to $E$-linear objects, and we get objects called $\BB^{\otimes E}_{\mid K}$-pairs in \cite{BC10} or $E$-$B$-pairs of $G_K$ in \cite{KN09}. They are pairs $W=(W_e,W_{\dr}^+)$ where $W_e$ is a free $E \otimes_{\Qp} \be$-module of finite rank endowed with a continuous semi-linear action of $G_K$, and $W_{\dr}^+$ is a $G_K$-stable $E \otimes_{\Qp} \bdr^+$-lattice in $W_{\dr} = (E \otimes_{\Qp} \bdr) \otimes_{E \otimes_{\Qp} \be} W_e$. Note that the action of $G_K$ is $E$-linear.

We say (definition 1.15 of \cite{KN09}) that a $\BB^{\otimes E}_{\mid K}$-pair $W$ is split triangulable if $W$ is a successive extension of objects of rank $1$, triangulable if there exists a finite extension $F/E$ such that the $\BB^{\otimes F}_{\mid K}$-pair $F \otimes_E W$ is split triangulable, and potentially triangulable if there exists a finite extension $L/K$ such that the $\BB^{\otimes E}_{\mid L}$-pair $W |_{G_L}$ is triangulable. If $V$ is a $p$-adic representation of $G_K$, we say that $V$ is trianguline if $W(V)$ is triangulable. 

Let $\Delta$ be a set of rank $1$ semi-linear $E \otimes_{\Qp} \be$-representations of $G_K$. We say that a $\BB^{\otimes E}_{\mid K}$-pair is split $\Delta$-triangulable if it is split triangulable, and the rank $1$ $E \otimes_{\Qp} \be$-representations of $G_K$ that come from the triangulation are all in $\Delta$. Let $\Delta(\Qp)$ be the set of rank $1$ $E \otimes_{\Qp} \be$-representations of $G_K$ that extend to $G_{\Qp}$. Theorem A then results from the following more general result (theorem \ref{maintheo}), applied to $K=\Qp$.

\begin{theoB}
\label{theoB}
If $X$ and $Y$ are two non-zero $\BB^{\otimes E}_{\mid K}$-pairs whose tensor product is $\Delta(\Qp)$-triangulable, then $X$ and $Y$ are both potentially triangulable.
\end{theoB}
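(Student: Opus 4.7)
The plan is to argue by induction on $\operatorname{rank}(X) + \operatorname{rank}(Y)$. The base case is $\operatorname{rank}(X) = 1$ (or symmetrically $\operatorname{rank}(Y) = 1$): then $X$ is tautologically split triangulable, and since tensoring with a rank one object preserves triangulability, $Y \cong X^{\vee} \otimes (X \otimes Y)$ is triangulable over $K$; hence both $X$ and $Y$ are (potentially) triangulable.

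For the inductive step, assume $\operatorname{rank}(X), \operatorname{rank}(Y) \geq 2$, and fix a $\Delta(\Qp)$-triangulation $0 = F_0 \subset F_1 \subset \cdots \subset F_{mn} = X \otimes Y$, enlarging $E$ if necessary so that it is split with rank one graded pieces $\delta_1, \ldots, \delta_{mn}$. The saturated rank one inclusion $F_1 \hookrightarrow X \otimes Y$ corresponds, under the tensor-Hom adjunction in the category of $E$-$B$-pairs of $G_K$, to a nonzero morphism $\varphi \colon F_1 \otimes Y^{\vee} \to X$, and dually to $\psi \colon F_1 \otimes X^{\vee} \to Y$. Setting $X_1 := \Sat(\operatorname{image}(\varphi)) \subseteq X$ and $Y_1 := \Sat(\operatorname{image}(\psi)) \subseteq Y$ produces nonzero saturated sub-$B$-pairs; if one of $\varphi, \psi$ is surjective (so $X$ becomes a line-twist of $Y^{\vee}$, or vice versa), potential triangulability of one factor transfers directly to the other and we are done.

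In the main case both $X_1$ and $Y_1$ are proper. Since characters of $G_{\Qp}$ are potentially crystalline, I would first pass to a finite extension $L/K$ over which all $\delta_i$ become crystalline. The next step is to extract from $X_1$ a rank one saturated sub-$B$-pair $M \subset X|_{G_L}$ (after possibly further enlarging $L$), using Sen theory together with the rigidity of Hodge-Tate-Sen weights across the embeddings $L \hookrightarrow E$ forced by the $\Delta(\Qp)$ hypothesis. Considering the short exact sequence $0 \to M \otimes Y \to X \otimes Y \to (X/M) \otimes Y \to 0$ of $E$-$B$-pairs of $G_L$, one would verify that $(X/M) \otimes Y$ inherits a $\Delta(\Qp)$-triangulation from the filtration $F_{\bullet}$, apply the inductive hypothesis to the pair $(X/M, Y)$, and glue $M$ back in to obtain a triangulation of $X$ (and, symmetrically, of $Y$) over a finite extension of $K$.

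The main obstacle is the descent step: producing the rank one saturated sub-$B$-pair $M \subset X|_{G_L}$ from $X_1$, and ensuring that $(X/M) \otimes Y$ inherits a $\Delta(\Qp)$-triangulation rather than merely a general triangulation. This is where the $\Delta(\Qp)$ hypothesis is decisive: it enforces the rigidity of Hodge-Tate-Sen data across embeddings needed both to extract $M$ and to verify that the inherited filtration on the quotient tensor product remains compatible with $\Delta(\Qp)$, so that the induction terminates within the potentially triangulable category.
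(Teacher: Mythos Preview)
Your inductive scheme has a structural gap. The hard case is when $X$ and $Y$ are both (completely) irreducible, and there your case split collapses: any nonzero $\varphi \colon F_1 \otimes Y^{\vee} \to X$ has saturated image equal to all of $X$, so $X_1 = X$, and likewise $Y_1 = Y$. Your ``main case'' ($X_1$, $Y_1$ both proper) is therefore vacuous in exactly the situation that matters, while in your ``surjective'' case you assert that potential triangulability of one factor transfers to the other---but neither factor is yet known to be potentially triangulable, so there is nothing to transfer. Even when $\varphi$ happens to be an isomorphism you have only reduced to the statement: if $\End(Y)$ is $\Delta(\Qp)$-triangulable then $Y$ is potentially triangulable, which is the heart of the theorem and no easier than the original. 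Your proposed fix---extract a rank-one $M \subset X|_{G_L}$ via Sen-theoretic rigidity---does not supply a mechanism: the $\Delta(\Qp)$ hypothesis constrains the Hodge--Tate--Sen weights of the rank-one pieces of $X \otimes Y$, each of which is a \emph{sum} of a weight of $X$ and a weight of $Y$, and says nothing about the existence of a $G_L$-stable line in $X$ alone. Producing such an $M$ \emph{is} the theorem.

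The paper proceeds quite differently. It passes at once to $K = E$ and to completely irreducible strict subquotients $X'$, $Y'$ of $X$, $Y$; since $X' \otimes Y'$ is a strict subquotient of $X \otimes Y$, it remains split $\Delta(\Qp)$-triangulable, and the first step of the triangulation gives $X'_e \simeq Y'_e(\mu)$ on the $\bee$-side. The real work is then: (i) any surjection $\End(X'_e) \to \bee(\delta)$ splits via the trace pairing and Schur's lemma, so the entire $\bee$-triangulation of $\End(X'_e)$ splits as a direct sum; (ii) this splitting lifts to the level of $B$-pairs by a cohomology argument in which the $\Delta(\Qp)$ hypothesis is used precisely to force $W_{\dr}^{G_K}=0$ for the relevant rank-one twists; (iii) a slope and determinant argument then shows the triangulation characters differ by de Rham characters of total weight zero, hence (up to $\bee$-admissible twist) by finite-order characters, which manufactures a nontrivial element of $\End(X'_e)^{G_L}$ and contradicts complete irreducibility unless $\operatorname{rank} X' = 1$. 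None of these ingredients are present in your outline, and the role you assign to $\Delta(\Qp)$ (Sen-weight rigidity for $X$) is not the role it actually plays (killing $W_{\dr}^{G_K}$ in step (ii)).
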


The proof of theorem B relies on the study of $E \otimes_{\Qp} \be$-representations of $G_K$ as well as on the study of the slopes, weights and cohomology of $\BB^{\otimes E}_{\mid K}$-pairs. The ring $E \otimes_{\Qp} \be$ has many non-trivial units, which makes the study of $\BB^{\otimes E}_{\mid K}$-pairs more difficult than when $E=\Qp$. Note finally that some of the results of this article already appear in \cite{DMphd}.

\section{Reminders and complements}

If $K$ is a finite extension of $\Qp$, let $G_K = \Gal(\overline{\mathbf{Q}}_p/K)$. Let $E$ be a finite Galois extension of $\Qp$ such that $K \subset E$, and let $\Emb=\Gal(E/\Qp)$. Let $E_0$ be the maximal unramified extension of $\Qp$ inside $E$. Let $\bdr^+$, $\bdr$, $\bcris^+$ and $\bcris$ be Fontaine's rings of $p$-adic periods (see for instance \cite{F94per}). They are all equipped with an action of $G_{\Qp}$, and $\bcris^+$ and $\bcris$ have in addition a Frobenius map $\phi$. Let $\be = (\bcris)^{\phi=1}$ and $\bee = E \otimes_{\Qp} \be$. The group $G_{\Qp}$ acts $E$-linearly on $\bee$.

\begin{prop}
\label{bepid}
The ring $\bee$ is a principal ideal domain.
\end{prop}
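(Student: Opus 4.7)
The plan is to deduce the result from the known fact that $\be$ itself is a principal ideal domain (a theorem of Fontaine, also visible from the Fargues--Fontaine description of $\be$ as functions on the affine curve $X_{FF}\setminus\{\infty\}$), exploiting that $\bee$ is free of rank $d = [E:\Qp]$ as a $\be$-module. This structural fact gives for free that $\bee$ is Noetherian of Krull dimension one, and the separability of $E/\Qp$ makes $\be \to \bee$ étale, hence $\bee$ is integrally closed in its total ring of fractions. These considerations reduce the problem to proving that $\bee$ is an integral domain with trivial ideal class group.

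The critical step is showing $\bee$ is a domain. I would exploit the commuting actions of $\Sigma = \Gal(E/\Qp)$ on the first factor and of $G_{\Qp}$ on the second: the fixed subring is $\bee^{G_{\Qp}} = E \otimes_{\Qp} \be^{G_{\Qp}} = E$, a field. If $\bee$ decomposed nontrivially as a product of rings, the continuous $G_{\Qp}$-action would permute the finitely many idempotents through a finite quotient, forcing $\bee^{G_{\Qp}}$ to itself decompose or collapse in a way incompatible with being the field $E$. A concrete alternative is to work inside $\bdr$: since $\Frac(\be) \subset \bdr$ and $E \subset \Qpbar \subset \bdr$, it suffices to show $E$ and $\Frac(\be)$ are linearly disjoint over $\Qp$ inside $\bdr$, which amounts to the identity $\Frac(\be) \cap \Qpbar = \Qp$. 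This should follow from the $\phi$-invariance defining $\be = \bcris^{\phi=1}$: an algebraic element of $\Frac(\be)$ arises from Frobenius-fixed data, and algebraic $\phi$-invariants in $\bcris$ reduce to $\Qp$. Either way, $E \otimes_{\Qp} \Frac(\be)$ is a field, so $\bee$ (a subring) is a domain.

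The final step is to show every maximal ideal of the resulting Dedekind domain $\bee$ is principal. For a maximal ideal $(\pi) \subset \be$ with residue field $K_\pi$, étale-ness yields $\pi\bee = \mathfrak{M}_1 \cdots \mathfrak{M}_d$ in $\bee$, and the residue decomposition $\bee/\pi\bee \cong E \otimes_{\Qp} K_\pi$ splits into $d$ copies of $K_\pi$ indexed by embeddings $\sigma: E \hookrightarrow K_\pi$. A natural candidate generator of $\mathfrak{M}_\sigma$ is $\alpha \otimes 1 - 1 \otimes \widetilde{\sigma(\alpha)}$, where $\alpha$ generates $E/\Qp$ and $\widetilde{\sigma(\alpha)} \in \be$ lifts $\sigma(\alpha)$. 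Verifying this is a global generator, rather than merely a local one at $\mathfrak{M}_\sigma$, reduces to computing its valuations at the other primes above $(\pi)$ (where it should be a unit) and combining with the principality of $(\pi)$.

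The main obstacle is the integrality step: ruling out nontrivial product decompositions of $\bee$ depends delicately on how the $G_{\Qp}$-structure of $\be$ interacts with scalar extension to $E$, or equivalently on the disjointness of $\Qpbar$ from $\Frac(\be)$ inside $\bdr$. The principal-generator step is essentially a local-to-global valuation check once Dedekind structure is in hand, and the first, structural, stage is entirely standard.
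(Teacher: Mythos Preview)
Your approach differs substantially from the paper's and has a genuine gap in the class-group step. The paper's argument is two lines: $\bee$ is a B\'ezout domain (this is cited from lemma~1.6 of \cite{KN09}, so the integrality is imported rather than proved), and $\bee$ is Noetherian as a quotient of $\be[X]$ with $\be$ a PID; a Noetherian B\'ezout domain is automatically a PID. No class-group computation ever enters.

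In your route, the candidate generator $y_\sigma=\alpha\otimes 1-1\otimes\widetilde{\sigma(\alpha)}$ does have valuation zero at the other $\mathfrak{M}_\tau$ lying over the same $(\pi)$, but you have no control whatsoever over its valuations at primes over a \emph{different} maximal ideal $(\pi')\subset\be$: the lift $\widetilde{\sigma(\alpha)}$ was chosen only modulo $\pi$, so $\tau'(\alpha)-(\widetilde{\sigma(\alpha)}\bmod\pi')$ has no reason to be nonzero. What you actually obtain is $(y_\sigma)+(\pi)=\mathfrak{M}_\sigma$, which merely exhibits $\mathfrak{M}_\sigma$ as two-generated---automatic in any Dedekind domain. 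The implicit hope that ``finite \'etale domain over a PID is a PID'' is false in general (e.g.\ $\OO_K[1/23]$ over $\ZZ[1/23]$ for $K=\QQ(\sqrt{-23})$, which is \'etale with class group $\ZZ/3\ZZ$), so this step genuinely needs an idea specific to $\be$, and you have not supplied one.

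Your integrality arguments are also incomplete as written. The $\phi$-invariance route stalls because $\phi$ does not act on $\bdr$, which is where the intersection $\Frac(\be)\cap\Qpbar$ is being taken; and a transitive $G_{\Qp}$-permutation of idempotents would still leave $\bee^{G_{\Qp}}$ a field, so that sketch does not give a contradiction. A correct version of the idempotent argument observes that any idempotent of $\bee$ maps into $E\otimes_{\Qp}\bdr^+$ inside $E\otimes_{\Qp}\bdr$, hence lies in $\bee\cap(E\otimes_{\Qp}\bdr^+)=E$ by the fundamental exact sequence, and is therefore trivial; combined with regularity (\'etale over the regular ring $\be$) this does yield a domain. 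But even granting this, the class-group obstacle above remains.
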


\begin{proof}
The ring $\bee$ is a B\'ezout domain; for $E=\Qp$ this is shown in proposition 1.1.9 of \cite{LB08}, and the same argument is used to show the general case in lemma 1.6 of \cite{KN09}. By theorem 6.5.2 of \cite{FF18}, the ring $\be$ is a principal ideal domain, and therefore $\bee$ is a principal ideal domain as well, since it is a quotient of the polynomial ring $\be[X]$, and thus Noetherian.
\end{proof}

Recall that a $\BB^{\otimes E}_{\mid K}$-pair is a pair $W=(W_e,W_{\dr}^+)$ where $W_e$ is a free $\bee$-module of finite rank endowed with a continuous semi-linear action of $G_K$, and $W_{\dr}^+$ is a $G_K$-stable $E \otimes_{\Qp} \bdr^+$-lattice in $W_{\dr} = (E \otimes_{\Qp} \bdr) \otimes_{\bee} W_e$.

\begin{prop}
\label{drplat}
If $W_e$ is a $\bee$-representation of $G_K$, then $(E \otimes_{\Qp} \bdr) \otimes_{\bee} W_e$ admits an $E \otimes_{\Qp} \bdr^+$-lattice stable under $G_K$.
\end{prop}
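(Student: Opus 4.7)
The plan is to reduce to the case $E = \Qp$, which is the analogous statement for $\be$-representations of $G_K$ proved in \cite{LB08}, and then to enlarge the resulting lattice by the $E$-action.

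First, I would view $W_e$ as a $\be$-representation of $G_K$ by restricting scalars along the inclusion $\be \hookrightarrow \bee$: since $\bee$ is free of rank $[E:\Qp]$ over $\be$, $W_e$ becomes a free $\be$-module of rank $n[E:\Qp]$ with continuous semi-linear $G_K$-action. By associativity of tensor products, the natural map
\[
\bdr \otimes_{\be} W_e \longrightarrow (E \otimes_{\Qp} \bdr) \otimes_{\bee} W_e
\]
is an isomorphism, so $W_{\dr}$ does not depend on whether we regard $W_e$ as a $\be$-representation or as a $\bee$-representation. Applying the case $E = \Qp$, there exists a $G_K$-stable $\bdr^+$-lattice $L_0 \subset W_{\dr}$.

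Next, I would saturate by setting $L := (E \otimes_{\Qp} \bdr^+) \cdot L_0 \subset W_{\dr}$. If $L_0 = \sum_{i} \bdr^+ v_i$ is a finite $\bdr^+$-spanning set, then $L = \sum_i (E \otimes_{\Qp} \bdr^+) v_i$ is finitely generated over $E \otimes_{\Qp} \bdr^+$; moreover $L \supset L_0$ spans $W_{\dr}$ over $E \otimes_{\Qp} \bdr$. For $G_K$-stability, recall that $G_K$ acts on $E \otimes_{\Qp} \bdr^+$ only through the second factor (preserving $\bdr^+$), so for $g \in G_K$,
\[
g(L) = g(E \otimes_{\Qp} \bdr^+) \cdot g(L_0) \subset (E \otimes_{\Qp} \bdr^+) \cdot L_0 = L.
\]

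Finally, to check that $L$ is a lattice in the intended sense, I would use that $\Qpbar \subset \bdr^+$ together with $E/\Qp$ Galois yields a decomposition $E \otimes_{\Qp} \bdr^+ \cong \prod_{\sigma \in \Gal(E/\Qp)} \bdr^+$ as a finite product of DVRs. Any finitely generated torsion-free module over such a ring is automatically projective (free on each factor), and the spanning condition for $L$ over $E \otimes_{\Qp} \bdr$ translates into each component being a full-rank $\bdr^+$-lattice in the corresponding component of $W_{\dr}$. The only substantive obstacle in this approach is the input from the case $E = \Qp$, whose proof requires establishing that the $G_K$-orbit of a naive $\bdr^+$-lattice is bounded inside $\bdr \otimes_{\be} W_e$; once that input is granted, the present proposition is essentially formal bookkeeping with extensions of scalars.
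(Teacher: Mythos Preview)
Your argument is correct. The identification $\bdr \otimes_{\be} W_e \cong (E \otimes_{\Qp} \bdr) \otimes_{\bee} W_e$ is valid, the saturation $L = (E \otimes_{\Qp} \bdr^+)\cdot L_0$ is $G_K$-stable for the reason you give, and the decomposition $E \otimes_{\Qp} \bdr^+ \cong \prod_{\sigma} \bdr^+$ does force each component of $L$ to be a free $\bdr^+$-module of rank exactly $n$, so $L$ is free of the right rank.

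The paper takes a slightly different route: rather than reducing to the $\Qp$-case and then saturating, it simply observes that Fontaine's original argument in \S 3.5 of \cite{F04} (which is the standard reference; the result is not actually proved in \cite{LB08}) goes through verbatim when $\bdr^+$ is replaced by $E \otimes_{\Qp} \bdr^+$, since that argument only uses that one starts from a finite free module over the fraction ring with a continuous semi-linear action. Your approach has the advantage of treating the $\Qp$-case as a black box, at the cost of the lattice bookkeeping at the end; the paper's approach avoids that bookkeeping but requires the reader to revisit Fontaine's proof. Both are short and essentially formal.
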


\begin{proof}
See \S 3.5 of \cite{F04}. The same argument gives an $E \otimes_{\Qp} \bdr^+$-lattice instead of a $\bdr^+$-lattice if one starts from an $E \otimes_{\Qp} \bdr$-representation.
\end{proof}

Recall that Nakamura has classified the $\BB^{\otimes E}_{\mid K}$-pairs of rank $1$, under the assumption that $E$ contains the Galois closure of $K$. Given a character $\delta : K^\times \to E^\times$, he constructs in \S 1.4 of \cite{KN09} a rank $1$ $\BB^{\otimes E}_{\mid K}$-pair $W(\delta)$, that we denote by $\BB(\delta)$, and proves that every rank $1$ $\BB^{\otimes E}_{\mid K}$-pair is of this form for a unique $\delta$. We have $\BB(\delta_1) \otimes \BB(\delta_2) = \BB(\delta_1 \delta_2)$ (\S 1.4 of \cite{KN09}). We denote by $\BB(\delta)_e$ the $\bee$-component of $\BB(\delta)$.

Recall (see for instance \S 2 of \cite{BC10} or \S 1.3 of \cite{KN09}) that $\BB^{\otimes E}_{\mid K}$-pairs have slopes. This comes from the equivalence of categories between $\BB^{\otimes E}_{\mid K}$-pairs and $(\phi,\Gamma)$-modules over the Robba ring, and Kedlaya's constructions and results for $\phi$-modules over the Robba ring (see \cite{KK04}). In particular, one can define the notion of isoclinic (pure of a certain slope) $\BB^{\otimes E}_{\mid K}$-pairs. For example, if $V$ is an $E$-linear representation of $G_K$, then $W(V) = (\bee \otimes_E V, (E \otimes_{\Qp} \bdr^+) \otimes_E  V)$ is pure of slope $0$, and every $\BB^{\otimes E}_{\mid K}$-pair that  is pure of slope $0$ is of this form (proposition 2.2 of \cite{BC10}).

We have the following slope filtration theorem (see theorem 2.1 of \cite{BC10}).

\begin{theo}
\label{slopefil}
If $W$ is a $\BB^{\otimes E}_{\mid K}$-pair,  there is a canonical filtration $\{0\} = W_0 \subset W_1 \subset \cdots \subset W_\ell = W$ by sub $\BB^{\otimes E}_{\mid K}$-pairs such that
\begin{enumerate}
\item for every $1 \leq i \leq \ell$, the quotient $W_i/W_{i-1}$ is isoclinic;
\item if $s_i$ is the slope of $W_i/W_{i-1}$, then $s_1 < s_2 < \cdots < s_\ell$.
\end{enumerate}
\end{theo}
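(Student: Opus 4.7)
The strategy is dictated by the parenthetical remark preceding the statement: transport the problem to $(\phi,\Gamma)$-modules and invoke Kedlaya's slope filtration theorem. First I would use the equivalence of categories between $\BB^{\otimes E}_{\mid K}$-pairs and $(\phi,\Gamma)$-modules over the Robba ring (and its $E$-linear version obtained by tensoring with $E$ over $\Qp$) recalled in \S 2 of \cite{BC10}. After this translation, it suffices to construct a canonical filtration of the associated $(\phi,\Gamma)$-module $M$ over $\mathcal{R}_K \otimes_{\Qp} E$ with isoclinic successive quotients of strictly increasing slopes.

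Forgetting the $\Gamma$-action and the $E$-structure, $M$ is simply a $\phi$-module over $\mathcal{R}_K$ of finite rank. Kedlaya's theorem from \cite{KK04} then supplies a canonical filtration $0 = M_0 \subset M_1 \subset \cdots \subset M_\ell = M$ by $\phi$-stable submodules whose graded pieces are isoclinic with slopes $s_1 < s_2 < \cdots < s_\ell$. Crucially the filtration is uniquely determined by the Harder--Newton polygon of $M$, and is therefore preserved by every $\phi$-equivariant endomorphism of $M$; in particular each $M_i$ is automatically stable under the action of $\Gamma$ and under multiplication by elements of $E$. Thus the $M_i$ are sub-$(\phi,\Gamma)$-modules over $\mathcal{R}_K \otimes_{\Qp} E$, and transferring this filtration back through the equivalence of categories produces the desired filtration $W_0 \subset \cdots \subset W_\ell = W$ by sub $\BB^{\otimes E}_{\mid K}$-pairs.

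The main obstacle that has to be addressed, and for which one must lean on \cite{BC10}, is that Kedlaya's theory is formulated for $\phi$-modules over $\mathcal{R}_K$, whereas our objects carry the extra $E$-linear structure; one needs to verify that the slope and the property of being isoclinic, as defined for $\BB^{\otimes E}_{\mid K}$-pairs, agree (up to the expected normalization by $[E:\Qp]$) with those one reads off from $M$ viewed as a plain $\phi$-module over $\mathcal{R}_K$. Once this compatibility and the canonicity of the filtration under forgetting the $E$-structure are in place, the statement follows from Kedlaya's theorem applied to the underlying $\phi$-module together with a descent via canonicity, as recorded in theorem 2.1 of \cite{BC10}.
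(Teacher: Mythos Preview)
The paper does not give its own proof of this theorem: it simply refers to theorem~2.1 of \cite{BC10} in the line preceding the statement. Your proposal correctly outlines the argument behind that reference---reduce to $(\phi,\Gamma)$-modules, apply Kedlaya's slope filtration to the underlying $\phi$-module, and use the canonicity of the filtration to recover the $\Gamma$- and $E$-stability---so there is nothing to add.
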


The following proposition gathers the results that we need concerning slopes of $\BB^{\otimes E}_{\mid K}$-pairs. Recall that $\Hom(X,Y) = (\Hom_{E \otimes_{\Qp} \be}(X_e,Y_e), \Hom_{E \otimes_{\Qp} \bdr^+}(X_{\dr}^+,Y_{\dr}^+))$.

\begin{prop}
\label{sloprop}
If $X$ is pure of slope $s$ and $Y$ is pure of slope $t$, then 
\begin{enumerate}
\item $\Hom(X,Y)$ is pure of slope $t-s$ and $X \otimes Y$ is pure of slope $s+t$;
\item if $X$ and $Y$ have the same rank and $X \subset Y$ and $s=t$, then $X=Y$;
\item if $Y$ is a direct summand of $X$, then $s=t$.
\end{enumerate}
\end{prop}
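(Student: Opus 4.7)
The plan is to transport each assertion to Kedlaya's slope theory for $\phi$-modules over the Robba ring, via the equivalence of categories between $\BB^{\otimes E}_{\mid K}$-pairs and $(\phi,\Gamma)$-modules recalled above. Slopes, purity, rank, and the operations $\Hom$, $\otimes$, sub- and quotient-objects all correspond under this equivalence, so it suffices to verify the three assertions as standard facts in Kedlaya's theory, or equivalently in the theory of vector bundles on the Fargues-Fontaine curve.

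For part (1), I would invoke the additivity of slopes under tensor products of isoclinic objects, which is classical. For the $\Hom$ claim, I would use the natural identification $\Hom(X,Y) \cong X^\vee \otimes Y$ (valid componentwise on the $\bee$-part and the $\bdr^+$-part) together with the fact that the dual of an isoclinic object of slope $s$ is isoclinic of slope $-s$ (checked using the determinant or directly on $\phi$-modules); this gives $\Hom(X,Y)$ slope $-s+t = t-s$.

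For part (3), I would use the standard slope inequalities for isoclinic objects: every sub-object of an isoclinic object of slope $s$ has slope $\leq s$, and every quotient has slope $\geq s$. If $Y$ is a direct summand of $X$, then $Y$ is simultaneously a sub-object and a quotient, so its slope $t$ satisfies both $t \leq s$ and $t \geq s$, forcing $s=t$.

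Part (2) is the step I expect to require the most care. Given $X \subset Y$ of the same rank $n$ and same slope $s=t$, I would first observe that the inclusion $X_e \hookrightarrow Y_e$ of free $\bee$-modules of the same rank has $\bee$-torsion cokernel, and that after tensoring with $E \otimes_{\Qp} \bdr$ it becomes an isomorphism, so $X_{\dr}^+ \subset Y_{\dr}^+$ sit as two lattices in a common $E \otimes_{\Qp} \bdr$-vector space. Interpreting $X$ and $Y$ as vector bundles on the Fargues-Fontaine curve, the quotient $Y/X$ becomes a torsion coherent sheaf of degree $\deg(Y) - \deg(X) = n(t-s) = 0$, which therefore vanishes. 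This yields $X_e = Y_e$ and $X_{\dr}^+ = Y_{\dr}^+$, hence $X=Y$.
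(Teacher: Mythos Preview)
Your proposal is correct. For part~(1) it coincides with the paper's proof, which simply cites Kedlaya's results (theorem~6.10 and proposition~5.13 of \cite{KK04}) for the behaviour of slopes under $\otimes$ and duals.

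For parts~(2) and~(3) your arguments differ from the paper's. For~(2), the paper reduces to rank~$1$ by taking determinants and then invokes proposition~2.3 of \cite{LB08}; your route via the Fargues--Fontaine curve, computing $\deg(Y/X)=n(t-s)=0$ and concluding that the torsion coherent sheaf $Y/X$ vanishes, is more geometric and equally valid, at the cost of importing slightly more machinery. For~(3), the paper uses the fact (again proposition~5.13 of \cite{KK04}) that the slope multiset of a direct sum is the union of those of the summands, so the single slope $s$ of $X$ must equal the single slope $t$ of $Y$; your argument via the semistability inequalities is also standard and reaches the same conclusion. One caveat: in Kedlaya's convention (and hence the paper's, see theorem~\ref{slopefil}) the slope filtration is \emph{increasing}, so the inequalities actually run the other way---sub-objects of an isoclinic object of slope $s$ have slope $\geq s$ and quotients have slope $\leq s$. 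Since a direct summand is both, your conclusion $s=t$ is unaffected, but you should state the inequalities in the correct direction.
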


\begin{proof}
For (1), see theorem 6.10 and proposition 5.13 of \cite{KK04}. For (2), we can take determinants and assume that $X$ and $Y$ are of rank $1$. The claim is then proposition 2.3 of \cite{LB08}. Item (3) follows from the fact that if $X=Y\oplus Z$, then the set of slopes of $X$ is the union of those of $Y$ and $Z$ (proposition 5.13 of \cite{KK04}).
\end{proof}

\section{The ring $\bee$}

Recall that $\bee = E \otimes_{\Qp} \be$. In this section, we determine the units of $\bee$ and study the rank $1$ $\bee$-representations of $G_E$. Let $q=p^h$ be the cardinality of the residue field of $\OO_E$, so that $E_0 = \QQ_{p^h}$. Let $\phi_E : E \otimes_{E_0} \bcris \to E \otimes_{E_0} \bcris$ be the map $\Id \otimes \phi^h$.

\begin{prop}
\label{fes}
We have an exact sequence
\[ 0 \to E \to \bee \to (E \otimes_{\Qp} \bdr) / (E \otimes_{\Qp} \bdr^+) \to 0. \]
\end{prop}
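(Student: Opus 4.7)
The plan is to obtain the asserted sequence as the image of the classical fundamental exact sequence of $p$-adic Hodge theory
\[ 0 \to \Qp \to \be \to \bdr/\bdr^+ \to 0 \]
under the functor $E \otimes_{\Qp} -$. Here the middle map is the composition of the inclusion $\be \hookrightarrow \bdr$ with the projection $\bdr \twoheadrightarrow \bdr/\bdr^+$, and exactness on the left amounts to the identity $(\bcris^+)^{\phi=1} = \Qp$, which is part of Fontaine's construction of the rings $\bcris$ and $\bdr$.

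Since $E$ is a finite extension of $\Qp$, it is free (hence flat) as a $\Qp$-module, so the functor $E \otimes_{\Qp} -$ is exact. Applying it to the fundamental exact sequence gives
\[ 0 \to E \otimes_{\Qp} \Qp \to E \otimes_{\Qp} \be \to E \otimes_{\Qp} (\bdr/\bdr^+) \to 0. \]
The first term is canonically $E$, the second is $\bee$ by definition. For the third, the same flatness argument applied to $0 \to \bdr^+ \to \bdr \to \bdr/\bdr^+ \to 0$ identifies $E \otimes_{\Qp} (\bdr/\bdr^+)$ with $(E \otimes_{\Qp} \bdr)/(E \otimes_{\Qp} \bdr^+)$, giving the desired sequence.

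There is no real obstacle here: the whole content of the statement is the flatness of $E$ over $\Qp$ together with the fundamental exact sequence already available in the literature. The only thing to verify carefully is that the maps obtained by tensoring agree with the natural candidates one would write down directly, namely $E \hookrightarrow \bee$ as scalars and $\bee \to (E \otimes_{\Qp}\bdr)/(E \otimes_{\Qp}\bdr^+)$ coming from the canonical inclusion $\bee \hookrightarrow E \otimes_{\Qp}\bdr$ followed by projection; this is immediate from the commutativity of tensor product with the construction of the relevant maps.
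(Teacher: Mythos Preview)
Your proof is correct and follows exactly the paper's approach: the paper's proof consists of the single sentence that the result follows from tensoring by $E$ the usual fundamental exact sequence $0 \to \Qp \to \be \to \bdr/\bdr^+ \to 0$ (citing Bloch--Kato). Your write-up simply makes the flatness argument and the identification of the quotient explicit.
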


\begin{proof}
This follows from tensoring by $E$ the usual fundamental exact sequence $0 \to \Qp \to \be \to \bdr / \bdr^+ \to 0$ (proposition 1.17 of \cite{BK90}).
\end{proof}

\begin{prop}
\label{beealt}
The natural map $\bee \to (E \otimes_{E_0} \bcris)^{\phi_E=1}$ is an isomorphism.
\end{prop}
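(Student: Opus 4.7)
The plan is to compare the two sides via a Chinese Remainder Theorem decomposition of $E \otimes_{\Qp} \bcris$. First, since $E$ is flat over $\Qp$ and $\be = \bcris^{\phi=1}$, we have $\bee = (E \otimes_{\Qp} \bcris)^{\Id \otimes \phi = 1}$, where here $\phi$ denotes $\Id_E \otimes \phi$. So it suffices to produce a natural isomorphism
\[
(E \otimes_{\Qp} \bcris)^{\Id \otimes \phi = 1} \cong (E \otimes_{E_0} \bcris)^{\phi_E=1}.
\]

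The key input is that $E_0 \subset \bcris$, with $[E_0:\Qp] = h$ and $\Gal(E_0/\Qp) = \langle \phi|_{E_0} \rangle$. The CRT then gives an isomorphism
\[
E_0 \otimes_{\Qp} \bcris \xrightarrow{\sim} \prod_{i=0}^{h-1} \bcris, \qquad x \otimes y \longmapsto (\phi^i(x) \cdot y)_i,
\]
and, extending scalars from $E_0$ to $E$,
\[
E \otimes_{\Qp} \bcris \xrightarrow{\sim} \prod_{i=0}^{h-1} (E \otimes_{E_0} \bcris).
\]
A direct computation (using that $\phi$ acts on $E_0 \subset \bcris$ as a generator of $\Gal(E_0/\Qp)$, cyclically permuting the $h$ embeddings $E_0 \hookrightarrow \bcris$) shows that $\Id_E \otimes \phi$ on the left corresponds, on the right, to the twisted shift
\[
(w_0, \ldots, w_{h-1}) \longmapsto (\Phi(w_{h-1}), \Phi(w_0), \ldots, \Phi(w_{h-2})),
\]
where $\Phi := \Id_E \otimes \phi$ acts on each factor $E \otimes_{E_0} \bcris$.

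The fixed-point analysis is then elementary: a tuple is invariant under this twisted shift iff $w_i = \Phi^i(w_0)$ for all $i$ and $\Phi^h(w_0) = \phi_E(w_0) = w_0$. Projection onto the $0$-th factor therefore induces an isomorphism $(E \otimes_{\Qp} \bcris)^{\Id \otimes \phi=1} \xrightarrow{\sim} (E \otimes_{E_0} \bcris)^{\phi_E=1}$. To check this agrees with the natural map, I would observe that $a \otimes_{\Qp} b \in \bee$ (with $b \in \be$) lands, under the CRT isomorphism, in the constant tuple $(a \otimes_{E_0} b, \ldots, a \otimes_{E_0} b)$, because $\phi^i(1) = 1$ and $\phi(b) = b$; its $0$-th component is then $a \otimes_{E_0} b$, as required. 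The main care is in pinning down the correct direction of the cyclic shift in the Frobenius computation; once that is settled, everything else is formal.
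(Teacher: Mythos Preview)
Your proof is correct. One small point of care: the map $\Phi = \Id_E \otimes \phi$ is not literally a well-defined endomorphism of a single copy of $E \otimes_{E_0} \bcris$, since $\phi$ is not $E_0$-linear on $\bcris$. What is well-defined is $\Id_E \otimes \phi$ as a map from the $i$-th factor $E \otimes_{E_0,\phi^i} \bcris$ to the $(i{+}1)$-th factor $E \otimes_{E_0,\phi^{i+1}} \bcris$; with that reading your fixed-point computation goes through verbatim, and $\Phi^h = \phi_E$ on the $0$-th factor exactly as you use it.

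The paper takes a different route. It first observes that $\phi^h$ is $E_0$-linear on $\bcris$, so by flatness $(E \otimes_{E_0} \bcris)^{\phi_E=1} = E \otimes_{E_0} \bcris^{\phi^h=1}$, reducing to the claim $\bcris^{\phi^h=1} = \QQ_{p^h} \otimes_{\Qp} \be$. This is then dispatched by Galois descent: $\phi$ gives a $\QQ_{p^h}$-semilinear action of $\Gal(\QQ_{p^h}/\Qp)$ on $\bcris^{\phi^h=1}$, and Speiser's lemma says such an action is trivial. Your CRT argument is more hands-on and entirely self-contained, while the paper's is shorter but invokes descent as a black box; the two are closely related in spirit, since unwinding Speiser's lemma for a cyclic extension is essentially the twisted-shift calculation you carry out.
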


\begin{proof}
Since $\phi_E$ is $E$-linear, we have $(E \otimes_{E_0} \bcris)^{\phi_E=1} = E \otimes_{E_0} \bcris^{\phi^h =1}$ and it is therefore enough to prove that $\bcris^{\phi^h =1} = \QQ_{p^h} \otimes_{\Qp} \bcris^{\phi=1}$. The group $\Gal(\QQ_{p^h}/\Qp)$ acts $\QQ_{p^h}$-semi-linearly on $\bcris^{\phi^h =1}$ via $\phi$, and the claim follows from Galois descent (Speiser's lemma).
\end{proof}

\begin{rema}
\label{genogqp}
The isomorphism of proposition \ref{beealt} is $G_E$-equivariant. 

In addition, if $g \in G_{\Qp}$ acts by $\Id \otimes g$ on $E \otimes_{\Qp} \be$, then it acts by $\Id \otimes g \phi^{-n(g)}$ on $(E \otimes_{E_0} \bcris)^{\phi_E=1}$  (where $n(g)$ is defined below).
\end{rema}

Let $\pi$ be a uniformizer of $\OO_E$, and let $\chi_\pi$ denote the Lubin-Tate character $\chi_\pi : G_E \to \OO_E^\times$ attached to $\pi$. For each $\tau \in \Emb = \Gal(E/\Qp)$, let $n(\tau)$ be the element of  $\{0,\hdots,h-1\}$ such that $\tau=\phi^{n(\tau)}$ on $E_0$. Let $t_\tau \in E \otimes_{E_0} \bcris^+$ denote the element constructed in \S 5 of \cite{LB16}, where (in the notation of \cite{LB16}) we take $F=E$. We have $t_{\tau} = (\tau \otimes \phi^{n(\tau)})(t_{\Id})$. The element $t_{\Id}$ is also denoted by $t_\pi$ in \cite{LB16}, and it is the same as the element $t_E$ constructed in \S 9 of \cite{PC02}. The usual $t$ of $p$-adic Hodge theory is $t=t_{\Qp}$ for $\pi=p$.

For each $\sigma \in \Emb$, we have a map $E \otimes_{E_0}  \bcris^+ \to \bdr^+$ given by $x \mapsto (\sigma \otimes \phi^{n(\sigma)})(x)$, followed by the natural injection of $E \otimes_{E_0}  \bcris^+$ in $\bdr^+$ (theorem 4.2.4 of \cite{F94per}). Finally, note that $E \cdot \Qpnrhat = E \otimes_{E_0} \Qpnrhat$ is contained in $E \otimes_{E_0}  \bcris^+$.

\begin{prop}
\label{ttauprop}
Let the notation be as above.
\begin{enumerate}
\item We have $\phi_E(t_\tau) = \tau(\pi) \cdot t_\tau$ and $g(t_\tau) = \tau(\chi_\pi(g)) \cdot t_\tau$ if $g \in G_E$;
\item the $t$-adic valuation of the $\sigma$-component of the image of $t_\tau$ via the map $E \otimes_{E_0}  \bcris^+ \to E \otimes_{\Qp} \bdr = \prod_{\sigma \in \Emb} \bdr$ given by $x \mapsto \{ (\sigma \otimes \phi^{n(\sigma)})(x)\}_{\sigma \in \Emb}$ is $1$ if $\sigma=\tau^{-1}$ and $0$ otherwise;
\item there exists $u \in (E \cdot \Qpnrhat)^\times$ such that $\prod_{\tau \in \Emb} t_\tau = u \cdot t$ in $E \otimes_{E_0} \bcris$.
\end{enumerate}
\end{prop}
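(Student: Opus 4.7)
The strategy is item by item: (1) reduces to the case $\tau = \Id$ via commutation relations; (2) reduces to the case $\tau = \Id$ using (1); and (3) combines (1) and (2) with Lubin-Tate reciprocity.

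For (1), both $\phi_E = \Id \otimes \phi^h$ and the action of $g \in G_E$ (given by $\Id \otimes g$ on $E \otimes_{E_0} \bcris^+$) commute with the endomorphism $\tau \otimes \phi^{n(\tau)}$: the first because $\phi^h$ and $\phi^{n(\tau)}$ commute on $\bcris^+$, the second because Galois commutes with Frobenius on $\bcris$. Applying these commutations to the definition $t_\tau = (\tau \otimes \phi^{n(\tau)})(t_{\Id})$ reduces (1) to the $\tau = \Id$ case, where $\phi_E(t_{\Id}) = \pi \cdot t_{\Id}$ and $g(t_{\Id}) = \chi_\pi(g) \cdot t_{\Id}$ are the defining properties of the Lubin-Tate period $t_\pi = t_{\Id}$ recalled in \cite{LB16} and \cite{PC02}.

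For (2), we compute the $\sigma$-component of $t_\tau$ as $(\sigma \otimes \phi^{n(\sigma)})(\tau \otimes \phi^{n(\tau)})(t_{\Id}) = (\sigma\tau \otimes \phi^{n(\sigma)+n(\tau)})(t_{\Id})$ in $\bdr^+$. Writing $n(\sigma)+n(\tau) = n(\sigma\tau) + k h$ with $k \in \{0,1\}$ and invoking $\phi_E^k(t_{\Id}) = \pi^k \cdot t_{\Id}$ from (1), this equals $(\sigma\tau)(\pi)^k$ times the $\sigma\tau$-component of $t_{\Id}$. Since $(\sigma\tau)(\pi)^k \in E^\times$ is a unit in $\bdr^+$ of $t$-adic valuation $0$, the valuation claim reduces to the $\tau = \Id$ case: the $\sigma'$-component of $t_{\Id}$ has $t$-adic valuation $1$ if $\sigma' = \Id$ and $0$ otherwise, which is established in \cite{LB16}.

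For (3), summing over $\tau$ using (2), each $\sigma$-component of $\prod_\tau t_\tau$ has $t$-adic valuation $1$, matching the valuation of the $\sigma$-component $p^{n(\sigma)} \cdot t$ of $t$. Hence $u := \prod_\tau t_\tau / t$ is a unit in $E \otimes_{\Qp} \bdr$. Applying (1), $\phi_E(u) = (\Nm_{E/\Qp}(\pi)/p^h) \cdot u$ with Frobenius eigenvalue in $\ZZ_p^\times$, and for $g \in G_E$ we have $g(u) = \eta(g) \cdot u$ with $\eta = \Nm_{E/\Qp}(\chi_\pi)/\chi_{\mathrm{cyc}}$ a $\Qp^\times$-valued character of $G_E$; by the Lubin-Tate/cyclotomic compatibility from local class field theory, $\eta$ is unramified, so $u$ is $I_E$-invariant. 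Combined with the unit and Frobenius conditions, this forces $u \in (E \cdot \Qpnrhat)^\times$. The main obstacle is precisely this last identification: pinning down an $I_E$-invariant unit in $E \otimes_{\Qp} \bdr$ with $\ZZ_p^\times$-valued Frobenius eigenvalue as an element of $E \cdot \Qpnrhat$ relies on the explicit description of $E \cdot \Qpnrhat$ inside $E \otimes_{E_0} \bcris^+$ together with the Lubin-Tate reciprocity identifying $\Nm_{E/\Qp}(\chi_\pi)$ with $\chi_{\mathrm{cyc}}$ on $I_E$.
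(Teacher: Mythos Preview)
Your proof is correct and follows essentially the same line as the paper's: the same reduction to $\tau=\Id$ for (1) and (2), and the same use of $\Nm_{E/\Qp}(\chi_\pi) = \chi_{\mathrm{cyc}}\cdot\eta$ with $\eta$ unramified for (3). The paper's argument for (3) is slightly leaner, invoking only (1): once $g(u)=\eta(g)u$ for $u=\prod_\tau t_\tau/t \in E\otimes_{E_0}\bcris$, one picks $v\in(\Qpnrhat)^\times$ with the same transformation law and concludes $u/v\in(E\otimes_{E_0}\bcris)^{G_E}=E$, so your valuation computation via (2) and your Frobenius-eigenvalue computation, while correct, are not actually needed.
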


\begin{proof}
Since $t_{\tau} = (\tau \otimes \phi^{n(\tau)})(t_{\Id})$, it is enough to check (1) for $\tau=\Id$. The corresponding statement is at the end of \S 3 of \cite{LB16} (page 3578). Likewise, (2) follows from the case $\tau=\Id$. That case now follows from (1) and the fact that the Hodge-Tate weight of $\chi_\pi$ is $1$ at $\sigma=\Id$ and $0$ at $\sigma\neq\Id$. Finally, we have $\Nm_{E/\Qp}(\chi_\pi) = \chi_{\mathrm{cyc}} \eta$ where $\eta : G_E \to \Qp^\times$ is unramified, and by (1), this implies (3).
\end{proof}

Note that $t_\tau^{-1} \in E \otimes_{E_0} \bcris$ since $t_\tau$ divides $t$ in $\bcris^+$ by (3) of proposition \ref{ttauprop}.

\begin{prop}
\label{beunit}
If $\nbf = \{n_\tau\}_{\tau \in \Emb}$ is a tuple of integers whose sum is $0$, then there exists $u_{\nbf} \in (E \cdot \Qpnrhat)^\times$ such that $u = \prod_{\tau \in \Emb} t_{\tau}^{n_\tau} u_{\nbf}$ belongs to $\bee$. The element $u$ is then a unit of $\bee$ and every unit of $\bee$ is of this form up to multiplication by $E^\times$.
\end{prop}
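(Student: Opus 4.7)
The strategy is to exploit the identification $\bee = (E \otimes_{E_0} \bcris)^{\phi_E = 1}$ of Proposition \ref{beealt} and reduce everything to a multiplicative $\phi_E$-equation in $E \cdot \Qpnrhat$.

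\emph{Existence and unit.} By Proposition \ref{ttauprop}(1), $\prod_\tau t_\tau^{n_\tau}$ is a $\phi_E$-eigenvector of eigenvalue $c := \prod_\tau \tau(\pi)^{n_\tau} \in E^\times$, so requiring that $u = \prod_\tau t_\tau^{n_\tau} u_\nbf$ be fixed by $\phi_E$ amounts to requiring $\phi_E(u_\nbf) = c^{-1} u_\nbf$. Since $\sum_\tau n_\tau = 0$, we have $v_p(c) = 0$, so $c^{-1} \in \OO_E^\times$. To construct $u_\nbf \in (E \cdot \Qpnrhat)^\times$ solving this equation, I would proceed by successive approximation in the $\pi$-adic topology: the residue field of $E \cdot \Qpnrhat$ is $\overline{\mathbf{F}}_p$, on which $\phi_E$ acts as $x \mapsto x^{p^h}$, so the residual equation $y^{p^h - 1} = \bar c^{-1}$ is solvable in $\overline{\mathbf{F}}_p^\times$ by divisibility; the multiplicative error at each step lies in $1 + \pi^{n+1} \OO$ and is killed by the surjectivity of the Artin--Schreier map $x \mapsto x^{p^h} - x$ on $\overline{\mathbf{F}}_p$. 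With $u_\nbf$ in hand, the product $u = \prod_\tau t_\tau^{n_\tau} u_\nbf$ lies in $E \otimes_{E_0} \bcris$ (since $t_\tau^{-1}$ does, by the remark preceding this proposition) and is $\phi_E$-fixed by construction, so $u \in \bee$. Since each factor of $u$ is a unit in $E \otimes_{E_0} \bcris$, the inverse $u^{-1}$ lies in $E \otimes_{E_0} \bcris$ and is $\phi_E$-fixed, confirming $u \in \bee^\times$.

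\emph{Converse.} For $u \in \bee^\times$, the embedding $E \otimes_{E_0} \bcris \hookrightarrow E \otimes_\Qp \bdr = \prod_{\sigma \in \Emb} \bdr$ yields a $t$-adic valuation $v_\sigma(u) \in \ZZ$ for each $\sigma \in \Emb$. Set $n_\tau := v_{\tau^{-1}}(u)$. By Proposition \ref{ttauprop}(2), $u' := u \cdot \prod_\tau t_\tau^{-n_\tau}$ satisfies $v_\sigma(u') = 0$ for all $\sigma$, so $u' \in (E \otimes_\Qp \bdr^+)^\times$; a direct computation yields $\phi_E(u') = c_0 u'$ with $c_0 := \prod_\tau \tau(\pi)^{-n_\tau}$. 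To show $\sum_\tau n_\tau = 0$, I would argue: since $u' \in E \otimes_{E_0} \bcris$ and $u' \in E \otimes_\Qp \bdr^+$, the identity $\bcris \cap \bdr^+ = \bcris^+$ places $u'$ (and analogously $(u')^{-1}$) in $E \otimes_{E_0} \bcris^+$; the fact that $(\bcris^+)^{\phi = \lambda} = 0$ whenever $v_p(\lambda) < 0$ then forces $v_p(c_0) \geq 0$ and $v_p(c_0^{-1}) \geq 0$, so $v_p(c_0) = 0$, and since $v_p(c_0) = -(\sum n_\tau)/e$ (with $e$ the ramification index of $E/\Qp$), we get $\sum_\tau n_\tau = 0$.

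With $\sum n_\tau = 0$ established, the existence part of the proposition applies to $c_0$ and produces $u_0 \in (E \cdot \Qpnrhat)^\times$ with $\phi_E(u_0) = c_0 u_0$. Then $u'/u_0$ is a unit in $E \otimes_{E_0} \bcris$ that is $\phi_E$-fixed and lies in $(E \otimes_\Qp \bdr^+)^\times$, so $u'/u_0 \in \bee \cap (E \otimes_\Qp \bdr^+) = E$ by the fundamental exact sequence of Proposition \ref{fes}; call this $\alpha \in E^\times$. Then $u = \alpha \cdot (\prod_\tau t_\tau^{n_\tau}) \cdot u_0$, as required. The main obstacle is the step $\sum_\tau n_\tau = 0$: unlike the rest of the argument, which amounts to formal manipulation of Propositions \ref{ttauprop} and \ref{fes} together with a Hensel-type approximation, it genuinely requires the slope-positivity of $\phi$-eigenvectors in $\bcris^+$.
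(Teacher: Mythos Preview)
Your proof is correct, and for the existence/unit half and the final conclusion via Proposition~\ref{fes} it essentially matches the paper's. The real divergence is in how you establish $\sum_\tau n_\tau = 0$.

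The paper does not go through $\bcris^+$ at all. Instead it observes that the $\Emb$-conjugates $u_\sigma := (\sigma \otimes \Id)(u)$ all lie in $\bee^\times$, and that their product $\prod_\sigma u_\sigma$ is $\Emb$-fixed, hence lies in $(\bee^\times)^{\Emb} = \be^\times$. Since $\be^\times = \Qp^\times$ (lemma~1.1.8 of \cite{LB08}), this product has $t$-adic valuation $0$ in $\bdr$; but that valuation is also $\sum_\tau n_\tau$, so the sum vanishes. This norm-to-$\be$ trick is short and uses only the known computation of $\be^\times$. Your route via $\bcris^+$ works too, but it requires checking two things you gloss over: that the identity $\bcris \cap \bdr^+ = \bcris^+$ passes cleanly to $E \otimes_{E_0}(-)$ under the embedding of Proposition~\ref{ttauprop}(2), and that slope-positivity in $\bcris^+$ constrains $\phi_E$-eigenvalues lying in $E$ rather than $E_0$ (for the latter one would apply $\Nm_{E/E_0}$ to land in $(\bcris^+)^{\phi^h = \Nm(c_0)}$). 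So your ``main obstacle'' is genuine for your argument, but the paper sidesteps it entirely with a more elementary input.
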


\begin{proof}
Let $w=\phi_E( \prod_{\tau \in \Emb} t_{\tau}^{n_\tau} ) / \prod_{\tau \in \Emb} t_{\tau}^{n_\tau} = \prod_{\tau \in \Emb} \tau(\pi)^{n_\tau}$ by (1) of proposition \ref{ttauprop}. Since $\sum_{\tau \in \Emb} n_\tau = 0$, we have $w \in \OO_E^\times$. There exists $u_{\nbf} \in (E \cdot \Qpnrhat)^\times$ such that $\phi_E(u_{\nbf})/u_{\nbf} = w^{-1}$, and then $u = \prod_{\tau \in \Emb} t_{\tau}^{n_\tau} u_{\nbf}$ belongs to $\bee$.  The inverse of $u$ is $\prod_{\tau \in \Emb} t_{\tau}^{-n_\tau} u_{\nbf}^{-1}$ which also belongs to $\bee$, so that $u \in \bee^\times$.

We now show that every $u \in \bee^\times$ is of this form. Let $n_\tau$ be the $t$-adic valuation in $\bdr$ of the $\tau^{-1}$-component $u_{\tau^{-1}} = (\tau^{-1} \otimes \Id)(u)$ of the image of $u \in E \otimes_{\Qp} \be$ in $E \otimes_{\Qp} \bdr = \prod_{\sigma \in \Emb} \bdr$. Note that $u_\sigma \in \bee^\times$ for all $\sigma \in \Emb$ and that $\prod_{\sigma \in \Emb} u_\sigma \in (\bee^\times)^{\Emb} = \be^\times$. We have $\be^\times = \Qp^\times$ by lemma 1.1.8 of \cite{LB08}, so that $\sum_{\tau \in \Emb} n_\tau = 0$. By (2) of proposition \ref{ttauprop}, the element $u \cdot \prod_{\tau \in \Emb} t_{\tau}^{-n_\tau} u_{\nbf}^{-1}$ belongs to $(E \otimes_{\Qp} \bdr^+) \cap \bee^\times$, and $(E \otimes_{\Qp} \bdr^+) \cap \bee^\times= E^\times$ by proposition \ref{fes}.
\end{proof}

Recall that an $E$-linear representation is crystalline or de Rham if the underlying $\Qp$-linear representation is crystalline or de Rham. We say that a character $\delta : G_E \to E^\times$ is $\bee$-admissible if there exists $y \in \bee \setminus \{0\}$ such that $\delta(g)=g(y)/y$. Such a character is then crystalline, hence also de Rham.

\begin{prop}
\label{begkstab}
If $y \in \bee \setminus \{0\}$ is such that $y \cdot \bee$ is stable under $G_E$, then $y \in \bee^\times$ and there exists $n_\tau \in \ZZ$ with $\sum _{\tau \in \Emb} n_{\tau} = 0$ and $y_0 \in (E \cdot \Qpnrhat)^\times$ such that $y = \prod_{\tau \in \Emb} t_{\tau}^{n_\tau} y_0$.
\end{prop}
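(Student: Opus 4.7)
The plan is to mimic the proof of Proposition \ref{beunit} for a general element of $\bee$, using the $G_E$-stability of $y \bee$ as a replacement for the hypothesis that $y$ is a unit. First, writing $g(y) = \alpha_g y$ for each $g \in G_E$ defines $\alpha_g \in \bee$; applying $g^{-1}$ to this equation gives $g^{-1}(\alpha_g) \cdot \alpha_{g^{-1}} = 1$, so $\alpha_g \in \bee^\times$. Next, the $\sigma$-components $y_\sigma := (\sigma \otimes \Id)(y) \in \bdr$ (for $\sigma \in \Emb$, under the embedding $\bee \hookrightarrow E \otimes_{\Qp} \bdr = \prod_\sigma \bdr$) are all nonzero: indeed, $\prod_\sigma y_\sigma$ is the determinant of multiplication by $y$ on the free $\be$-module $\bee$ of rank $[E:\Qp]$, and this determinant is nonzero since $\bee$ is a domain (Proposition \ref{bepid}) and $y \neq 0$.

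Set $n_\tau := v_t(y_{\tau^{-1}}) \in \ZZ$ for each $\tau \in \Emb$ and form
\[
z := y \cdot \prod_{\tau \in \Emb} t_\tau^{-n_\tau} \in E \otimes_{E_0} \bcris.
\]
By (2) of Proposition \ref{ttauprop}, the $\sigma$-component of $z$ has $t$-adic valuation $n_{\sigma^{-1}} - n_{\sigma^{-1}} = 0$, so $z$ lies in $E \otimes_{E_0} \bcris^+$ with $z_\sigma \in (\bdr^+)^\times$ for every $\sigma$. Using (1) of Proposition \ref{ttauprop}, one computes $\phi_E(z) = \lambda z$ with $\lambda := \prod_\tau \tau(\pi)^{-n_\tau} \in E^\times$.

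The hard part is to upgrade $z \in E \otimes_{E_0} \bcris^+$ to $z \in (E \cdot \Qpnrhat)^\times$. I expect this to be the main obstacle, and to follow from a structural lemma: a $\phi_E$-eigenvector in $E \otimes_{E_0} \bcris^+$ whose image in $E \otimes_{\Qp} \bdr^+$ has invertible component in each $\bdr^+$ factor necessarily lies in $E \cdot \Qpnrhat$. This reflects the fact that a rank $1$ crystalline representation with trivial Hodge filtration is unramified. Granted this, the equation $\phi_E(z) = \lambda z$ takes place in $(E \cdot \Qpnrhat)^\times$, and since $\phi_E$ preserves the valuation $v_E$ on this field we must have $v_E(\lambda) = 0$, which amounts to $\sum_\tau n_\tau = 0$.

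Finally, setting $y_0 := z$ yields $y = \prod_\tau t_\tau^{n_\tau} y_0$ with $y_0 \in (E \cdot \Qpnrhat)^\times$ and $\sum_\tau n_\tau = 0$, which is the form predicted by the proposition. That $y$ itself lies in $\bee^\times$ then follows from Proposition \ref{beunit}, since any element of this shape is a unit of $\bee$.
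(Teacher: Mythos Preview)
Your argument has a genuine gap at the ``structural lemma''. As stated, that lemma is false: already for $E=\Qp$, the space $(\bcris^+)^{\phi=p}$ contains elements $z$ with $\theta(z)\neq 0$ (equivalently $z\in(\bdr^+)^\times$), since the map $\theta:(\bcris^+)^{\phi=p}\to\mathbf{C}_p$ is surjective; any such $z$ is a $\phi$-eigenvector in $\bcris^+$ with unit image in $\bdr^+$, yet it cannot lie in $\Qpnrhat$ because $\phi$ preserves the $p$-adic valuation there. Your justification (``a rank $1$ crystalline representation with trivial Hodge filtration is unramified'') invokes a Galois representation, but your lemma makes no reference to the $G_E$-action on $z$, so that heuristic does not apply. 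A secondary issue: you assert $z\in E\otimes_{E_0}\bcris^+$, but your valuation computation only places $z$ in $(E\otimes_{E_0}\bcris)\cap(E\otimes_{\Qp}\bdr^+)$, which is larger.

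The paper avoids this by exploiting the $G_E$-stability more fully at the outset. Rather than stopping at $\alpha_g:=g(y)/y\in\bee^\times$, it observes that each component of $\alpha_g$ in $\bdr$ is of the form $g(w)/w$ for $w\in\bdr^\times$ and hence lies in $\bdr^+$; combined with the fundamental exact sequence (Proposition~\ref{fes}) this forces $\alpha_g\in E^\times$. Then $\delta:g\mapsto\alpha_g$ is a genuine character $G_E\to E^\times$, and it is crystalline since $y$ is a period. The known classification of crystalline characters writes $\delta=\prod_\tau\tau(\chi_\pi)^{n_\tau}\cdot\eta_0$ with $\eta_0$ unramified, and comparing $y$ with $\prod_\tau t_\tau^{n_\tau}$ times an unramified period (using $(E\otimes_{E_0}\bcris)^{G_E}=E$) gives the desired form. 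The condition $\sum_\tau n_\tau=0$ then drops out of $\phi_E(y)=y$. In short, the key step you are missing is precisely the upgrade from $\alpha_g\in\bee^\times$ to $\alpha_g\in E^\times$, which is what makes the crystalline-character machinery available.
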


\begin{proof}
If $y \cdot \bee$ is stable under $G_E$, then $g(y)/y \in \bee$ for all $g \in G_E$. Note that if $z \in \bdr^\times$, then $g(z)/z \in \bdr^+$. This implies that $g(y)/y \in \bee \cap (E \otimes_{\Qp} \bdr^+)$. By proposition \ref{fes}, $g(y)/y \in E^\times$. The map $\delta : G_E \to E^\times$ given by $\delta(g) = g(y)/y$ is a crystalline character of $G_E$, and hence of the form $\prod_{\tau \in \Emb} \tau(\chi_\pi)^{n_\tau} \eta_0$ where $n_\tau \in \ZZ$ and $\eta_0 : G_E \to E^\times$ is unramified. This implies that there exists $y_0 \in (E \cdot \Qpnrhat)^\times$ such that $y = \prod_{\tau \in \Emb} t_\tau^{n_\tau} y_0$. If $y \in \bee$, then $\phi_E(y) = y$ so that $\sum_{\tau \in \Emb} n_\tau = 0$ by (1) of proposition \ref{ttauprop}, and hence $y \in \bee^\times$.
\end{proof}

\begin{coro}
\label{beadm}
If $\delta : G_E \to E^\times$ is a $\bee$-admissible character, then $\delta$ is de Rham and the sum of its weights at all $\tau \in \Emb$ is $0$. Conversely, any character $\delta : G_E \to E^\times$ that is de Rham with the sum of its weights at all $\tau \in \Emb$ equal to $0$ is the product of a $\bee$-admissible character by a potentially unramified character.
\end{coro}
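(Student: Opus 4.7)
The plan is to handle the two directions separately, using Proposition \ref{begkstab} for the forward direction and Proposition \ref{beunit} for the converse.

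For the forward direction, I would write $\delta(g) = g(y)/y$ for some $y \in \bee \setminus \{0\}$. Since $\delta(g) \in E^\times \subset \bee^\times$, the principal $\bee$-submodule $y \cdot \bee$ is $G_E$-stable, so Proposition \ref{begkstab} yields a factorization $y = \prod_{\tau \in \Emb} t_\tau^{n_\tau} y_0$ with $\sum_\tau n_\tau = 0$ and $y_0 \in (E \cdot \Qpnrhat)^\times$. Proposition \ref{ttauprop}(1) then gives
\[ \delta(g) = \prod_{\tau \in \Emb} \tau(\chi_\pi(g))^{n_\tau} \cdot \frac{g(y_0)}{y_0}. \]
Each Lubin-Tate twist $\tau(\chi_\pi)$ is crystalline, and $g \mapsto g(y_0)/y_0$ is unramified because inertia fixes $E \cdot \Qpnrhat$; so $\delta$ is de Rham. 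The Hodge-Tate weight of $\delta$ at each embedding $\sigma \in \Emb$ equals $n_{\sigma^{-1}}$ by Proposition \ref{ttauprop}(2), and the weights therefore sum to zero.

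For the converse, let $\delta$ be de Rham with weight sum zero. I would invoke the standard structure theorem for de Rham characters of $G_E$ (a consequence of the $p$-adic monodromy theorem together with the classification of rank-one crystalline characters) to write $\delta = \prod_{\tau \in \Emb} \tau(\chi_\pi)^{n_\tau} \cdot \eta$, with $\eta$ potentially unramified and the $n_\tau$ matching the Hodge-Tate weights of $\delta$ via $\sigma \mapsto \sigma^{-1}$. The hypothesis forces $\sum_\tau n_\tau = 0$, so Proposition \ref{beunit} applied to $\nbf = \{n_\tau\}$ supplies a unit $u = \prod_\tau t_\tau^{n_\tau} u_\nbf \in \bee^\times$ whose associated $\bee$-admissible character is
\[ \delta_u(g) = \frac{g(u)}{u} = \prod_{\tau \in \Emb} \tau(\chi_\pi(g))^{n_\tau} \cdot \frac{g(u_\nbf)}{u_\nbf}. \]
Writing $\eta_u(g) = g(u_\nbf)/u_\nbf$, which is unramified for the same reason as above, the quotient $\delta / \delta_u = \eta \cdot \eta_u^{-1}$ is potentially unramified, and $\delta = \delta_u \cdot (\eta \eta_u^{-1})$ is the desired factorization.

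The only non-bookkeeping ingredients are the computation of Hodge-Tate weights of the $t_\tau$ (already isolated in Proposition \ref{ttauprop}(2)) and, for the converse, the structure theorem for de Rham characters of $G_E$. Neither is a real obstacle, so I expect the argument to be mostly a careful tracking of indices, the main pitfall being the re-indexing $\tau \leftrightarrow \sigma^{-1}$ when translating between the factorization of $y$ and the Hodge-Tate weights of $\delta$.
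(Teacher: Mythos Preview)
Your proposal is correct and follows essentially the same route as the paper: the forward direction is extracted from Proposition \ref{begkstab} (the paper simply says ``follows immediately'' from it, while you unpack the computation), and the converse proceeds exactly as in the paper by writing $\delta = \prod_\tau \tau(\chi_\pi)^{n_\tau}\eta_0$, invoking Proposition \ref{beunit} to build the unit $u$, and comparing $g(u)/u$ with $\delta$. The only cosmetic difference is that you cite Proposition \ref{ttauprop}(2) for the weight computation, whereas it comes more directly from (1) together with the Hodge--Tate weights of $\chi_\pi$; either way the sum over $\tau$ vanishes, so the re-indexing issue you flag is harmless.
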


\begin{proof}
The first assertion follows immediately from proposition \ref{begkstab}. We now prove the second assertion. If $\delta : G_E \to E^\times$ is de Rham, it is of the form $\prod_{\tau \in \Emb} \tau(\chi_\pi)^{n_\tau} \eta_0$ where $n_\tau \in \ZZ$ and $\eta_0 : G_E \to E^\times$ is potentially unramified. Let $\nbf = \{n_\tau\}_{\tau \in \Emb}$ and $u$ be the corresponding unit (proposition \ref{beunit}). If $g \in G_E$, then $g(u)/u = \prod_{\tau \in \Emb} \tau(\chi_\pi(g))^{n_\tau} \eta_u(g)$ where $\eta_u : G_E \to E^\times$ is unramified. The second assertion then follows from this.
\end{proof}

A $\bee$-representation of $G_K$ is a free $\bee$-module of finite rank with a semi-linear and continuous action of $G_K$ (recall that $G_K$ acts linearly on $E$). If $\delta \in H^1(G_K,\bee^\times)$ (for example if $\delta : G_K \to E^\times$ is a character), we denote by $\bee(\delta)$ the resulting rank $1$ $\bee$-representation of $G_K$.

\begin{prop}
\label{suberep}
If $W_e$ is a $\bee$-representation of $G_K$, and if $X_e$ is a sub $\bee$-module of $W_e$ stable under $G_K$, then $X_e$ is a free $\bee$-module, and it is saturated in $W_e$.
\end{prop}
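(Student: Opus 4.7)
The plan is to prove freeness by invoking the PID property of $\bee$ (Proposition \ref{bepid}), and then to prove saturation by passing to determinants and applying Proposition \ref{begkstab}.

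For freeness: since $\bee$ is a principal ideal domain, it is Noetherian, so $X_e$ is a finitely generated submodule of the free module $W_e$. The standard structure theorem for finitely generated modules over a PID then gives that $X_e$ is free, of some rank $r$ at most equal to the rank of $W_e$.

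For saturation, I would introduce the saturation
\[ \tilde X_e = \{ w \in W_e : \lambda w \in X_e \text{ for some } \lambda \in \bee \setminus \{0\} \}. \]
This is a $\bee$-submodule of $W_e$, which is $G_K$-stable because if $\lambda w \in X_e$ then, for $g \in G_K$, $g(\lambda) g(w) \in X_e$ and $g(\lambda) \neq 0$. The quotient $\tilde X_e / X_e$ is $\bee$-torsion by construction, so $\tilde X_e$ is free of the same rank $r$ as $X_e$, and it is manifestly saturated in $W_e$. The goal reduces to showing $\tilde X_e = X_e$.

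For this, I would compare the determinants $\det X_e = \wedge_{\bee}^r X_e$ and $\det \tilde X_e = \wedge_{\bee}^r \tilde X_e$, both rank $1$ free $\bee$-modules carrying semi-linear $G_K$-actions. The inclusion $X_e \hookrightarrow \tilde X_e$ induces an injection $\det X_e \hookrightarrow \det \tilde X_e$. Fix a generator $e$ of $\det \tilde X_e$ and write $\det X_e = d\bee \cdot e$ for some $d \in \bee \setminus \{0\}$. Since $G_K$ acts semi-linearly on $\det \tilde X_e$, there exists $\alpha(g) \in \bee^\times$ with $g(e) = \alpha(g) e$; then $g(de) = g(d) \alpha(g) e$ must lie in $d\bee \cdot e$, so $g(d) \in d\bee$, and applying the same to $g^{-1}$ yields $g(d) \bee = d \bee$. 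Hence $d \bee$ is a $G_K$-stable, and in particular $G_E$-stable, ideal of $\bee$. Proposition \ref{begkstab} now forces $d \in \bee^\times$, so $\det X_e = \det \tilde X_e$ inside $\det \tilde X_e$. By Smith normal form for the inclusion $X_e \subset \tilde X_e$ of free $\bee$-modules of the same rank, the elementary divisors multiply to $d$ up to a unit, and being a unit, each divisor is a unit, so $X_e = \tilde X_e$.

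The only genuinely subtle step is the transition from the $G_K$-stability of $X_e$ inside $W_e$ to the $G_K$-stability of the ideal $d\bee$ inside $\bee$; once that is in hand, the result of Proposition \ref{begkstab} on $G_E$-stable nonzero principal ideals of $\bee$ applies directly (since $G_E \subset G_K$) and concludes the proof.
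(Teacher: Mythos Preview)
Your argument is correct. The paper does not actually prove this statement; it simply cites lemma~1.10 of \cite{KN09}. Your proof is thus more self-contained than the paper's: you deduce the result entirely from facts already established in the paper, namely that $\bee$ is a PID (Proposition~\ref{bepid}) and that every nonzero $G_E$-stable principal ideal of $\bee$ is the unit ideal (Proposition~\ref{begkstab}). The reduction of saturation to a rank-one statement via determinants is clean and is very much in the spirit of Nakamura's own argument, which likewise exploits the structure of $\bee^\times$ and the classification of $G_E$-stable ideals. There is no circularity: Proposition~\ref{begkstab} precedes Proposition~\ref{suberep} in the paper and its proof does not invoke it.
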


\begin{proof}
See lemma 1.10 of \cite{KN09}.
\end{proof}

\begin{prop}
\label{berkonebis}
If $W$ is a rank $1$ $\bee$-representation of $G_E$, then there exists $\delta : G_E \to E^\times$ such that $W=\bee(\delta)$.
\end{prop}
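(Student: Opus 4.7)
The plan is to show that for any basis $e$ of the rank-one free $\bee$-module $W$, the resulting continuous cocycle $c : G_E \to \bee^\times$ defined by $g(e) = c(g) \cdot e$ automatically takes values in $E^\times$. Setting $\delta := c$ then immediately gives $W = \bee(\delta)$, with no change of basis required.

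For each $\tau \in \Emb$, let $v_{\tau^{-1}}$ denote the $t$-adic valuation on the $\tau^{-1}$-component under the isomorphism $E \otimes_{\Qp} \bdr \simeq \prod_{\sigma \in \Emb} \bdr$, and set $\nbf(y) = (v_{\tau^{-1}}(y))_{\tau \in \Emb}$ for $y \in \bee^\times$. Combining Proposition \ref{ttauprop}(2) with Proposition \ref{beunit}, $\nbf$ is a surjective homomorphism from $\bee^\times$ onto the subgroup $\ZZ^\Emb_{\sum=0}$ of tuples of sum zero, and its kernel is exactly $E^\times$: indeed, if $\nbf(y) = 0$ then Proposition \ref{beunit} writes $y = u_0 \cdot \lambda$ with $\lambda \in E^\times$ and $u_0 \in (E \cdot \Qpnrhat)^\times$ satisfying $\phi_E(u_0) = u_0$, and Speiser's lemma (as already used in Proposition \ref{beealt}) forces $u_0 \in E$.

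The key step is to show that $\nbf \circ c \colon G_E \to \ZZ^\Emb_{\sum = 0}$ is a continuous group homomorphism. For the homomorphism property, the cocycle identity $c(gh) = c(g) \cdot g(c(h))$, combined with Proposition \ref{ttauprop}(1) and the fact that $g \in G_E$ acts diagonally on $\prod_{\sigma} \bdr$ while preserving each $t$-adic valuation (since $g(t)$ is a unit multiple of $t$), yields $\nbf(c(gh)) = \nbf(c(g)) + \nbf(g(c(h))) = \nbf(c(g)) + \nbf(c(h))$. For continuity, continuity of $c$ together with the continuity of each projection $\bee \hookrightarrow E \otimes_{\Qp} \bdr \to \bdr$ and the local constancy of the $t$-adic valuation on $\bdr^\times$ suffice. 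Since $G_E$ is profinite and $\ZZ^\Emb_{\sum = 0}$ is discrete and torsion-free, $\nbf \circ c$ must be identically zero, so $c(g)$ lies in the kernel $E^\times$ of $\nbf$ for every $g$, as required.

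The only mildly delicate point is the continuity of $\nbf \circ c$, which forces one to pay attention to the natural topologies on $\bee$ and $\bdr$; the rest is formal bookkeeping with Propositions \ref{ttauprop} and \ref{beunit}. Note also that once the conclusion $c(g) \in E^\times$ is secured, continuity of $\delta \colon G_E \to E^\times$ follows from the continuity of $c$ and the fact that $E^\times$ is a closed subgroup of $\bee^\times$ in the compatible topology.
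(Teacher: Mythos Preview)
Your proof is correct and follows essentially the same approach as the paper's: choose a basis to obtain a cocycle $c:G_E\to\bee^\times$, use Proposition~\ref{beunit} to identify $\bee^\times/E^\times$ with the sum-zero tuples $\ZZ^\Emb_{\sum=0}$, observe via Proposition~\ref{ttauprop}(1) that the induced map $G_E\to\ZZ^\Emb_{\sum=0}$ is a continuous homomorphism, and conclude it is trivial since $G_E$ is profinite. Your write-up is somewhat more explicit than the paper's (particularly on the kernel computation and the continuity point), but the argument is the same.
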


\begin{proof}
If we choose a basis $w$ of $W$, then $g(w)=\delta(g)w$ with $\delta(g) \in \bee^\times$, so that $\delta(g)$ is of the form $\prod_{\tau \in \Emb} t_\tau^{n_\tau(g)} u_{\nbf(g)}$ by proposition \ref{beunit}. Since $\delta(gh) = \delta(g) g(\delta(h))$, (1) of proposition \ref{ttauprop} implies that the maps $n_\tau : G_E \to \ZZ$ are continuous homomorphisms. They are therefore trivial, and this implies that $\delta(g) \in E^\times$.
\end{proof}

\begin{rema}
\label{notunik}
The character $\delta$ in proposition \ref{berkonebis} is not unique, since it can be multiplied by any $\bee$-admissible character of $G_E$.
\end{rema}

\begin{rema}
\label{notchar}
If $K \neq E$, it is not necessarily true that every rank $1$ $\bee$-representation of $G_K$ is of the form $\bee(\delta)$ for a character $\delta : G_K \to E^\times$.
\end{rema}

\begin{proof}
Take $E=\Qp(\sqrt{p})$ and $K=\Qp$ and $W= (E \otimes_{\Qp} \bcris)^{\phi=\pi} = t_{\Id} \cdot \bee$. The $E$-linear action of $G_{\Qp}$ on $W$ is given by the map $\delta : g \mapsto g(t_{\Id})/t_{\Id}$. If $g \in G_E$, then $\delta(g) = \chi_\pi(g)$. If $u=t_{\Id}^n t_{\tau}^{-n} u_{n,-n} \in \bee^{\times}$ as in proposition \ref{beunit}, and $g \notin G_E$, then $g(ut_{\Id})/ut_{\Id} = t_{\Id}^{-2n-1} t_\tau^{2n+1} v$ with $v \in (E \cdot \Qpnrhat)^\times$. Therefore, there is no character $\eta : G_{\Qp} \to E^\times$ such that $W = \bee(\eta)$. 

Note that $W$ is the $\bee$-component of the $\BB^{\otimes E}_{\mid K}$-pair $W_0^{-1}$ of \S 1.4 of \cite{KN09}.
\end{proof}

\begin{rema}
\label{ffpid}
The results of this section provide a new proof of proposition \ref{bepid}. 
\end{rema}

\begin{proof}
By theorem 6.5.2 of \cite{FF18}, the ring $(E \otimes_{E_0} \bcris^+[1/t_{\Id}])^{\phi_E=1}$ is a PID. Since we have shown $\bee$ is a localization of $(E \otimes_{E_0} \bcris^+[1/t_{\Id}])^{\phi_E=1}$, it is itself a PID.
\end{proof}

\begin{prop}
\label{fracbeinv}
We have $\Frac(\bee)^{G_K}=E$.
\end{prop}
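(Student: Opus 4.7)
The plan is to split the claim into two reductions: first, every $G_K$-invariant element of $\Frac(\bee)$ actually lies in $\bee$ itself; second, $\bee^{G_K} = E$. Since the inclusion $E \subseteq \Frac(\bee)^{G_K}$ is clear (as $G_K$ acts $E$-linearly on $\bee$), these two statements together yield the equality.

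For the first reduction, I would take a nonzero $x \in \Frac(\bee)^{G_K}$ and exploit that $\bee$ is a principal ideal domain by proposition \ref{bepid}, hence a UFD. Write $x = a/b$ with $a, b \in \bee$ coprime. The invariance $g(x) = x$ rewrites as $a \cdot g(b) = b \cdot g(a)$. Because $G_K$ acts on $\bee$ by ring automorphisms, $g(a)$ and $g(b)$ remain coprime, so the divisibility $b \mid a \cdot g(b)$ combined with $\gcd(a,b) = 1$ forces $b \mid g(b)$; applying the same argument to $g^{-1}$ gives $g(b) \mid b$. Hence $b \cdot \bee$ is stable under $G_K$, and in particular under $G_E$ (since $K \subseteq E$ implies $G_E \subseteq G_K$). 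Proposition \ref{begkstab} then forces $b \in \bee^\times$, so $x = a b^{-1} \in \bee$.

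For the second reduction, I would reduce to computing $\be^{G_K}$. Choosing a $\Qp$-basis of $E$ decomposes $\bee = E \otimes_{\Qp} \be$ as a finite direct sum of copies of $\be$ on which $G_K$ acts factor-by-factor, so $\bee^{G_K} = E \otimes_{\Qp} \be^{G_K}$. Since $\be \subseteq \bcris$, one has $\be^{G_K} \subseteq \bcris^{G_K} \cap \be = K_0 \cap \bcris^{\phi=1}$, where $K_0$ is the maximal absolutely unramified subfield of $K$. But $\phi$ acts on $K_0$ as the usual absolute Frobenius of $K_0/\Qp$, so $K_0^{\phi=1} = \Qp$. This gives $\be^{G_K} = \Qp$ and hence $\bee^{G_K} = E$.

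The main subtlety is the coprimality step in the first reduction: it depends essentially on $\bee$ being a UFD, and it is at this point that the preceding analysis of $G_E$-stable principal ideals (proposition \ref{begkstab}) is brought to bear. Everything else is routine bookkeeping.
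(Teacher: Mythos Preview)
Your proof is correct and follows essentially the same approach as the paper: write the fraction in lowest terms, use $G_K$-invariance and coprimality in the PID $\bee$ to show the denominator generates a $G_E$-stable ideal, invoke proposition \ref{begkstab} to force it into $\bee^\times$, and finish with $\bee^{G_K}=E$. The only cosmetic differences are that the paper applies proposition \ref{begkstab} to both numerator and denominator (whereas you only need the denominator), and that you spell out the computation $\bee^{G_K}=E \otimes_{\Qp} \be^{G_K}=E$ which the paper takes as known.
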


\begin{proof}
Take $x/y \in \Frac(\bee)^{G_K}$ with $x$, $y \in \bee$ coprime. If $g \in G_K$, then $g(x)y=xg(y)$ so that $x$ divides $g(x)$ and $y$ divides $g(y)$ in $\bee$ (recall that $\bee$ is a PID). By proposition \ref{begkstab}, $x$ and $y$ belong to $\bee^\times$. This implies that $x/y \in \bee^{G_K}=E$.
\end{proof}

\begin{coro}
\label{wegkinv}
If $W_e$ is a $\bee$-representation of $G_K$, then $\dim_E W_e^{G_K} \leq \operatorname{rk} W_e$.
\end{coro}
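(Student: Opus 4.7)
The plan is to deduce the bound from Proposition~\ref{fracbeinv} via the standard ``fraction field plus minimal relation'' argument. Since $G_K$ acts $E$-linearly on $\bee$, the fixed points $W_e^{G_K}$ form an $E$-vector subspace of $W_e$, so it suffices to show that any finite $E$-linearly independent family $v_1,\ldots,v_n \in W_e^{G_K}$ remains $\bee$-linearly independent in $W_e$, or equivalently $\Frac(\bee)$-linearly independent in $\Frac(\bee) \otimes_{\bee} W_e$. Passing to $\Frac(\bee)$ is legitimate because $W_e$ is free of finite rank over $\bee$, so it embeds into $\Frac(\bee) \otimes_{\bee} W_e$, which has $\Frac(\bee)$-dimension equal to $\operatorname{rk} W_e$.

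Assume for contradiction that $v_1,\ldots,v_n$ are $\Frac(\bee)$-linearly dependent, and fix a relation $\sum_{i=1}^{n} \lambda_i v_i = 0$ with $\lambda_i \in \Frac(\bee)$ of minimal support. Renumbering, we may assume $\lambda_1 \neq 0$, and after scaling we may assume $\lambda_1 = 1$. For every $g \in G_K$, applying $g$ to the relation (and using $g(v_i)=v_i$) gives $\sum_i g(\lambda_i) v_i = 0$; subtracting yields $\sum_{i \geq 2} (g(\lambda_i) - \lambda_i) v_i = 0$, a relation with strictly smaller support. By minimality, $g(\lambda_i) = \lambda_i$ for all $i$ and all $g \in G_K$, so every $\lambda_i$ lies in $\Frac(\bee)^{G_K} = E$ by Proposition~\ref{fracbeinv}. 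But then $\sum_i \lambda_i v_i = 0$ is a nontrivial $E$-linear relation among the $v_i$, contradicting their $E$-linear independence.

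I do not anticipate a serious obstacle: the only non-formal input is Proposition~\ref{fracbeinv}, which has already been established, and everything else is the Artin-style minimal-relation trick. The one point to be slightly careful about is justifying the passage from $\bee$-linear to $\Frac(\bee)$-linear independence, which works precisely because $W_e$ is a free $\bee$-module of finite rank, so that $W_e \hookrightarrow \Frac(\bee) \otimes_{\bee} W_e$ and the target has $\Frac(\bee)$-dimension equal to $\operatorname{rk} W_e$, yielding the desired inequality $\dim_E W_e^{G_K} \leq \operatorname{rk} W_e$.
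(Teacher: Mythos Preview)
Your argument is correct and is essentially the same as the paper's: the paper simply asserts that, ``by a standard argument,'' Proposition~\ref{fracbeinv} implies that the natural map $\bee \otimes_E W_e^{G_K} \to W_e$ is injective, and you have written out that standard argument explicitly via the Artin-style minimal-relation trick over $\Frac(\bee)$. There is nothing to add; the approaches coincide.
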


\begin{proof}
By a standard argument, proposition \ref{fracbeinv} implies that the map $\bee \otimes_E W_e^{G_K} \to W_e$ is injective. This implies the corollary.
\end{proof}

\section{Triangulable representations}

In this section, we study triangulable $\BB^{\otimes E}_{\mid K}$-pairs and $\bee$-representations of $G_K$. We say  that a $\BB^{\otimes E}_{\mid K}$-pair is irreducible if it has no non-trivial saturated sub $\BB^{\otimes E}_{\mid K}$-pair (see \S 2.1 of \cite{LB08}).

\begin{prop}
\label{weirred}
If $W = (W_e,W_{\dr}^+)$ is an irreducible $\BB^{\otimes E}_{\mid K}$-pair, then $W_e$ is an irreducible $\bee$-representation of $G_K$.
\end{prop}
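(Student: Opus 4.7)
The plan is to prove the contrapositive: given a non-zero proper sub $\bee$-representation $X_e \subset W_e$, I will build a non-trivial saturated sub $\BB^{\otimes E}_{\mid K}$-pair $X=(X_e,X_{\dr}^+)$ of $W$, contradicting the irreducibility hypothesis.

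The first step uses Proposition \ref{suberep}: $X_e$ is automatically a free $\bee$-module and is saturated in $W_e$, so that $W_e/X_e$ is free (hence flat) over $\bee$. Tensoring the short exact sequence $0 \to X_e \to W_e \to W_e/X_e \to 0$ with $E \otimes_{\Qp} \bdr$ therefore yields an injection $X_{\dr} := (E \otimes_{\Qp} \bdr) \otimes_{\bee} X_e \hookrightarrow W_{\dr}$, with $X_{\dr}$ stable under $G_K$.

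Second, I define the de Rham component by intersection inside $W_{\dr}$:
\[ X_{\dr}^+ := X_{\dr} \cap W_{\dr}^+. \]
This is $G_K$-stable since both sides of the intersection are. It is an $E \otimes_{\Qp} \bdr^+$-submodule of $W_{\dr}^+$. Since $E \otimes_{\Qp} \bdr^+$ is Noetherian (it is a finite product of copies of the DVR $\bdr^+$, as $E$ is assumed to contain $K$ and is Galois over $\Qp$) and $W_{\dr}^+$ is finitely generated and free over it, $X_{\dr}^+$ is finitely generated; being torsion-free it is in fact free. Moreover, for any $x \in X_{\dr} \subset W_{\dr}$, some power $t^n x$ lies in $W_{\dr}^+$ and hence in $X_{\dr}^+$, so $(E \otimes_{\Qp} \bdr) \cdot X_{\dr}^+ = X_{\dr}$. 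Thus $X_{\dr}^+$ is a $G_K$-stable $E \otimes_{\Qp} \bdr^+$-lattice in $X_{\dr}$, and $(X_e,X_{\dr}^+)$ is a $\BB^{\otimes E}_{\mid K}$-pair. The very definition $X_{\dr}^+ = X_{\dr} \cap W_{\dr}^+$ forces $W_{\dr}^+/X_{\dr}^+$ to be torsion-free (hence free), so the sub-object is saturated on the de Rham side as well. This contradicts the assumed irreducibility of $W$.

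The only non-formal step is the verification that $X_{\dr}^+$ is genuinely a lattice; I expect that to be the main obstacle, and it is handled by the product-of-DVR structure of $E \otimes_{\Qp} \bdr^+$ together with the standard observation that powers of $t$ clear denominators between $W_{\dr}$ and $W_{\dr}^+$.
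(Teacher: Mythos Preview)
Your proof is correct and follows exactly the same route as the paper's: apply Proposition~\ref{suberep} to get $X_e$ free and saturated, then set $X_{\dr}^+ = X_{\dr} \cap W_{\dr}^+$ to obtain a saturated sub $\BB^{\otimes E}_{\mid K}$-pair. The paper states the lattice property of $X_{\dr}^+$ without justification, whereas you supply the details (Noetherianity of $E \otimes_{\Qp} \bdr^+$ as a product of DVRs, clearing denominators by powers of $t$); this is a helpful elaboration but not a different argument.
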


\begin{proof}
Let $X_e$ be a sub-object of $W_e$. By proposition \ref{suberep}, it is a saturated and free submodule of $W_e$. The space $X_{\dr}^+ = X_{\dr} \cap W_{\dr}^+$ is an $E \otimes_{\Qp} \bdr^+$ lattice of $X_{\dr}$ stable under $G_K$. Hence $X=(X_e,X_{\dr}^+)$ is a saturated sub $\BB^{\otimes E}_{\mid K}$-pair of $W$.
\end{proof}

\begin{coro}
\label{tribe}
If $W$ is a $\BB^{\otimes E}_{\mid K}$-pair, then $W$ is split triangulable as a $\BB^{\otimes E}_{\mid K}$-pair if and only if $W_e$ is split triangulable as a $\bee$-representation of $G_K$.
\end{coro}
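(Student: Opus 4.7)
The forward direction is immediate: given a split triangulation $0 = W_0 \subset W_1 \subset \cdots \subset W_\ell = W$ of $W$ as a $\BB^{\otimes E}_{\mid K}$-pair, taking $\bee$-components yields a filtration $0 \subset W_{1,e} \subset \cdots \subset W_{\ell,e} = W_e$ whose successive quotients are the $\bee$-components of rank $1$ $\BB^{\otimes E}_{\mid K}$-pairs, hence rank $1$ $\bee$-representations of $G_K$.

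For the converse, I would imitate the construction in the proof of Proposition \ref{weirred}. Suppose we are given a filtration $0 = W_{0,e} \subset W_{1,e} \subset \cdots \subset W_{\ell,e} = W_e$ of $\bee$-representations of $G_K$ whose successive quotients have rank $1$. For each $i$, set $W_{i,\dr} = (E \otimes_{\Qp} \bdr) \otimes_{\bee} W_{i,e}$ and $W_{i,\dr}^+ = W_{i,\dr} \cap W_{\dr}^+$ inside $W_{\dr}$, and define $W_i = (W_{i,e}, W_{i,\dr}^+)$. By Proposition \ref{suberep}, $W_{i,e}$ is a free $\bee$-module saturated in $W_e$, and exactly as in Proposition \ref{weirred}, $W_i$ is then a saturated sub $\BB^{\otimes E}_{\mid K}$-pair of $W$. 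So I obtain a chain of saturated sub $\BB^{\otimes E}_{\mid K}$-pairs with $\bee$-components equal to the given $W_{i,e}$.

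It then remains to check that the graded piece $W_i/W_{i-1}$ is an honest rank $1$ $\BB^{\otimes E}_{\mid K}$-pair, which is the only mildly delicate point. Its $\bee$-component $W_{i,e}/W_{i-1,e}$ is free of rank $1$ by assumption. For the de Rham side, the identity $W_{i,\dr}^+ \cap W_{i-1,\dr} = W_{i-1,\dr}^+$ shows that the natural map $W_{i,\dr}^+/W_{i-1,\dr}^+ \hookrightarrow W_{i,\dr}/W_{i-1,\dr}$ is injective. Since $E$ is Galois over $\Qp$, the ring $E \otimes_{\Qp} \bdr^+$ is a finite product of discrete valuation rings, so any finitely generated torsion-free module over it is free; applying this to the submodule $W_{i,\dr}^+/W_{i-1,\dr}^+$ of the free module $W_{i,\dr}/W_{i-1,\dr}$ gives freeness, and $(E \otimes_{\Qp} \bdr)$-spanning is clear since $W_{i,\dr}^+$ generates $W_{i,\dr}$ after inverting $t$. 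Thus $W_{i,\dr}^+/W_{i-1,\dr}^+$ is a $G_K$-stable $E \otimes_{\Qp} \bdr^+$-lattice in $W_{i,\dr}/W_{i-1,\dr}$.

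The only real step of substance is the lattice check for the quotient on the de Rham side; everything else is the same bookkeeping as in Proposition \ref{weirred}. I do not anticipate any genuine obstacle.
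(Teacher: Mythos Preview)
Your proof is correct and follows essentially the same approach as the paper: the paper's proof simply says the forward direction is clear and that the converse follows from the construction in Proposition~\ref{weirred}, which is exactly what you do. Your additional verification that the de Rham component of each successive quotient is a genuine $E \otimes_{\Qp} \bdr^+$-lattice is a detail the paper leaves implicit, and your argument for it (via the decomposition of $E \otimes_{\Qp} \bdr^+$ as a product of DVRs) is fine.
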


\begin{proof}
It is clear that if $W$ is split triangulable, then so is $W_e$. Conversely, the proof of proposition \ref{weirred} shows how to construct a triangulation of $W$ from a triangulation of $W_e$.
\end{proof}

Let $\Delta$ be a set of rank $1$ semi-linear $\bee$-representations of $G_K$. Recall that a $\BB^{\otimes E}_{\mid K}$-pair is split $\Delta$-triangulable if it is split triangulable, and the rank $1$ $\bee$-representations of $G_K$ that come from the triangulation are all in $\Delta$.

\begin{prop}
\label{extriang}
If $0 \to W' \to W \to W'' \to 0$ is an exact sequence of $\BB^{\otimes E}_{\mid K}$-pairs, then $W$ is split $\Delta$-triangulable if and only if $W'$ and $W''$ are split $\Delta$-triangulable.
\end{prop}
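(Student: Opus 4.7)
The forward direction is by concatenation: given split $\Delta$-triangulations of $W'$ and $W''$, I would lift the filtration of $W''$ to a filtration of $W$ via the surjection $W \to W''$, and splice it onto the given filtration of $W'$ sitting inside $W$. The graded pieces are then exactly those of $W'$ followed by those of $W''$, each of which is a rank $1$ object in $\Delta$.

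The reverse direction is the substantive one. My plan is to pass to $\bee$-representations via corollary \ref{tribe}, so it suffices to show: if $0 \to W'_e \to W_e \to W''_e \to 0$ is exact and $W_e$ admits a filtration with successive quotients rank $1$ and in $\Delta$, then so do $W'_e$ and $W''_e$. Fixing such a filtration $0 = W_{e,0} \subset \cdots \subset W_{e,n} = W_e$, I would set $W'_{e,i} := W'_e \cap W_{e,i}$ and let $W''_{e,i}$ be the image of $W_{e,i}$ in $W''_e$. The snake lemma then furnishes, for each $i$, an exact sequence
\[ 0 \to W'_{e,i}/W'_{e,i-1} \to W_{e,i}/W_{e,i-1} \to W''_{e,i}/W''_{e,i-1} \to 0. \]

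By proposition \ref{suberep}, the leftmost term is a saturated sub-$\bee$-module of the rank $1$ object $W_{e,i}/W_{e,i-1}$, hence either $0$ or the whole of $W_{e,i}/W_{e,i-1}$. In either case both end terms of the exact sequence are zero or isomorphic to $W_{e,i}/W_{e,i-1} \in \Delta$. Discarding the zero steps in the two filtrations produces split $\Delta$-triangulations of $W'_e$ and $W''_e$, and corollary \ref{tribe} lifts these back to the level of $\BB^{\otimes E}_{\mid K}$-pairs.

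I expect the only delicate point to be checking that corollary \ref{tribe} indeed translates the $\Delta$-condition (not just split triangulability) faithfully between $\BB^{\otimes E}_{\mid K}$-pairs and their $\bee$-parts, together with the routine verification that the $(-)_e$ functor preserves the short exact sequence and the operations of intersection and image. Once these are in hand, the argument is the standard sub-quotient dance for filtrations by simple objects, with proposition \ref{suberep} supplying the saturation step automatically.
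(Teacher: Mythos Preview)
Your proof is correct and shares the paper's overall strategy --- reduce to $\bee$-representations via corollary \ref{tribe} and exploit proposition \ref{suberep} --- but the execution of the hard direction differs. The paper first asserts (without detail) that $W'_e$ and $W''_e$ inherit triangulations, and then invokes the Jordan--H\"older theorem: since proposition \ref{suberep} forces every $G_K$-stable submodule of a rank $1$ $\bee$-representation to be $0$ or everything, rank $1$ objects are simple, so any two triangulations of $W_e$ are composition series with the same multiset of subquotients; in particular the triangulation passing through $W'_e$ has all its pieces in $\Delta$. You instead construct the induced filtrations on $W'_e$ and $W''_e$ explicitly by intersection and projection, and read off directly from the snake-lemma exact sequence that each nonzero graded piece is literally one of the $W_{e,i}/W_{e,i-1} \in \Delta$. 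Your route is more self-contained (it supplies the step the paper leaves implicit and avoids quoting Jordan--H\"older), while the paper's phrasing records the slightly stronger fact that the multiset of rank $1$ constituents is independent of the chosen triangulation. The point you flag as delicate is harmless: $\Delta$ is by definition a set of rank $1$ $\bee$-representations, and the construction in the proof of proposition \ref{weirred} underlying corollary \ref{tribe} produces a triangulation of the $\BB^{\otimes E}_{\mid K}$-pair whose $e$-parts are exactly those of the given triangulation of $W_e$.
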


\begin{proof}
If $W'$ and $W''$ are split $\Delta$-triangulable, then $W$ is obviously split $\Delta$-triangulable. We now prove the converse. If $W_e$ admits a triangulation, then so do $W_e'$ and $W_e''$. By corollary \ref{tribe}, $W'$ and $W''$ are therefore split triangulable. Proposition \ref{suberep} implies that two different triangulations of $W_e$ give rise to two composition series of $W_e$ (seen as a $\bee$-representation of $G_K$). The set of rank $1$ $\bee$-representations attached to any triangulation of $W_e$ is therefore well-defined up to permutation by the Jordan-H\"older theorem. Hence if $W$ is split $\Delta$-triangulable, then so are $W'$ and $W''$.
\end{proof}

\begin{prop}
\label{splitbe}
If $W_e$ is an irreducible $\bee$-representation of $G_K$, and $\delta \in H^1(G_K,\bee^\times)$, then every surjective map $\pi : \End(W_e) \to \bee(\delta)$ of $\bee$-representations of $G_K$ is split.
\end{prop}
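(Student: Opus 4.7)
The plan is to re-express $\pi$ via the tensor-hom adjunction, use irreducibility of $W_e$ to show the resulting map is an isomorphism, and then invert it to produce the section.

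First I would use the adjunction $\Hom_{G_K}(\End(W_e), \bee(\delta)) \cong \Hom_{G_K}(W_e, W_e \otimes_{\bee} \bee(\delta))$ to translate $\pi$ into a nonzero $G_K$-equivariant $\bee$-linear map $\tilde\pi : W_e \to W_e(\delta) := W_e \otimes_{\bee} \bee(\delta)$ (nonzero because $\pi$ is surjective). Since twisting by the invertible rank-$1$ object $\bee(\delta)$ preserves irreducibility, $W_e(\delta)$ is also irreducible. By Proposition \ref{suberep}, both $\ker\tilde\pi$ and $\operatorname{im}\tilde\pi$ are saturated sub-$\bee$-representations of $W_e$ and $W_e(\delta)$ respectively; together with $\tilde\pi \neq 0$ and the irreducibility of both sides, this forces $\tilde\pi$ to be injective with full image, hence a $G_K$-equivariant $\bee$-linear isomorphism.

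Next I would apply the dual adjunction $\Hom_{G_K}(W_e(\delta), W_e) \cong \Hom_{G_K}(\bee(\delta), \End(W_e))$ to the inverse $\tilde\pi^{-1}$, producing a candidate map $s : \bee(\delta) \to \End(W_e)$. The composition $\pi \circ s$ is a $G_K$-equivariant $\bee$-linear endomorphism of $\bee(\delta)$, hence multiplication by some $c \in \bee^{G_K} = E$ (using Proposition \ref{fracbeinv}). A short trace computation, based on the identity $\tilde\pi^{-1} \circ \tilde\pi = \Id_{W_e}$ and the fact that the adjunctions above are realized via the (non-degenerate, $G_K$-equivariant) trace pairing on $\End(W_e)$, identifies $c$ with $\operatorname{Tr}(\Id_{W_e}) = r$, the rank of $W_e$. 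Since $r$ is invertible in $E$, the rescaled map $r^{-1} s$ is the desired section.

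The conceptually key step is the second paragraph: once one recognizes that $\tilde\pi$ lives between two irreducible $\bee$-representations of the same rank, Proposition \ref{suberep} immediately upgrades it to an isomorphism. The one thing to check carefully is the final trace calculation, since a priori the scalar $c$ could vanish; the potential obstacle is purely bookkeeping, namely tracking the twists so that the composition $\pi \circ s$ genuinely corresponds to $\operatorname{Tr}(\tilde\pi^{-1}\tilde\pi)$ rather than to zero.
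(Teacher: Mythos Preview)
Your proposal is correct and follows essentially the same line as the paper. The paper realizes $\pi$ explicitly as $f \mapsto \Tr(fh)\cdot e_\delta$ via the self-duality of $\End(W_e)$ under the trace pairing, notes that $h$ then gives a $G_K$-equivariant map $W_e \to W_e(\delta)$ which is invertible by irreducibility, and writes down the splitting $\End(W_e) = \ker(\pi) \oplus \bee\cdot h^{-1}$; your $\tilde\pi$ is this $h$, your $s(e_\delta)$ is $h^{-1}$, and your scalar $c = \Tr(h^{-1}h) = r$ is exactly what makes $\bee\cdot h^{-1}$ a complement to $\ker(\pi)$.
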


\begin{proof}
Write $\bee(\delta) = \bee \cdot e_\delta$, where $g(e_\delta) = \delta(g) e_\delta$ with $\delta(g) \in \bee^\times$. Recall that if $A$ is a ring and $M$ is a free $A$-module, then $\End_A(M)$ is its own dual, for the pairing $(f,g) \mapsto \Tr(fg)$. The map $\pi$ is therefore of the form $f \mapsto \Tr(fh) \cdot e_\delta$ for some $h \in \End(W_e)$. The map $h$ satisfies $g(h) = \delta(g)^{-1} h$, and therefore gives rise to a $G_K$-equivariant map $h : W_e \to W_e(\delta)$. Since $W_e$ is irreducible, $h$ is invertible. We can then write $\End(W_e) = \ker(\pi) \oplus \bee \cdot h^{-1}$, which shows that $\pi$ is split. 
\end{proof}

\begin{theo}
\label{bedirsum}
If $W_e$ is an irreducible $\bee$-representation of $G_K$ such that $\End(W_e)$ is split triangulable, then the triangulation of $\End(W_e)$ splits.
\end{theo}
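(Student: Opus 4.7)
The plan is to iterate proposition \ref{splitbe} so as to peel off rank $1$ direct summands from $\End(W_e)$ in a way compatible with the given filtration. Write the triangulation of $\End(W_e)$ as $0 = F_0 \subset F_1 \subset \cdots \subset F_n = \End(W_e)$ with $F_j/F_{j-1} \cong \bee(\delta_j)$ for suitable $\delta_j \in H^1(G_K,\bee^\times)$.

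I will prove by descending induction on $i \in \{0,1,\ldots,n\}$ the existence of rank $1$ sub-$\bee$-representations $L_{i+1},\ldots,L_n$ of $\End(W_e)$ with $L_j \cong \bee(\delta_j)$ and an internal direct sum decomposition
\[ \End(W_e) = F_i \oplus L_{i+1} \oplus \cdots \oplus L_n. \]
The case $i = n$ is the tautological decomposition $\End(W_e) = F_n$. For the inductive step, assume the decomposition holds for $i \geq 1$, let $p_i \colon \End(W_e) \twoheadrightarrow F_i$ be the projection arising from the decomposition, and let $q_i \colon F_i \twoheadrightarrow F_i/F_{i-1} \cong \bee(\delta_i)$ be the quotient coming from the filtration. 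Then $q_i \circ p_i$ is a surjection of $\bee$-representations of $G_K$ from $\End(W_e)$ onto $\bee(\delta_i)$. Since $W_e$ is irreducible, proposition \ref{splitbe} provides a splitting, i.e., a rank $1$ sub $L_i \subset \End(W_e)$ mapping isomorphically onto $\bee(\delta_i)$ under $q_i \circ p_i$. The kernel of $q_i \circ p_i$ is manifestly $F_{i-1} \oplus L_{i+1} \oplus \cdots \oplus L_n$ (using the inductive decomposition), so we obtain
\[ \End(W_e) = F_{i-1} \oplus L_i \oplus L_{i+1} \oplus \cdots \oplus L_n, \]
which is the statement at $i-1$.

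Taking $i = 0$ yields $\End(W_e) = L_1 \oplus \cdots \oplus L_n$ with $L_j \cong \bee(\delta_j)$, which is precisely the assertion that the triangulation of $\End(W_e)$ splits. I do not foresee a serious obstacle here: all the nontrivial content already sits in proposition \ref{splitbe}, and the rest is just a mildly careful iteration in which each application of that proposition splits off one more rank $1$ piece compatibly with what has already been split off.
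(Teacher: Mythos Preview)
Your proof is correct and follows essentially the same approach as the paper's own argument: a descending induction that, at each stage, composes the projection onto the remaining filtration piece with the quotient map to its top graded piece and then applies proposition \ref{splitbe} to split off one more rank $1$ summand. The only differences are notational (your $F_i$, $p_i$, $q_i$, $L_j$ versus the paper's $X_j$, $\pi_j$, $\bee(\delta_j)$), and your write-up is in fact slightly more explicit about why the kernel of $q_i \circ p_i$ has the claimed form.
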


\begin{proof}
Write $\{0\} = X_0 \subset X_1 \subset \cdots \subset X_d = \End(W_e)$, and $X_i/X_{i-1} = \bee(\delta_i)$ for some $\delta_i \in H^1(G_K,\bee^\times)$. By proposition \ref{splitbe}, the exact sequence $0 \to X_{d-1} \to \End(W_e) \to \bee(\delta_d) \to 0$ is split, and therefore $\End(W_e) = X_{d-1} \oplus \bee(\delta_d)$.

Suppose that we have an isomorphism $\End(W_e) = X_j \oplus \bee(\delta_{j+1}) \oplus \cdots \oplus \bee(\delta_d)$. Let $\pi_j$ denote the composition $\End(W_e) \to X_j \to \bee(\delta_j)$. By proposition \ref{splitbe},  $\End(W_e) = \ker(\pi_j) \oplus \bee(\delta_j)$. We have $\ker(\pi_j) = X_{j-1} \oplus \bee(\delta_{j+1}) \oplus \cdots \oplus \bee(\delta_d)$, so that $\End(W_e) = X_{j-1} \oplus \bee(\delta_j) \oplus \cdots \oplus \bee(\delta_d)$. The claim  follows by induction.
\end{proof}

\begin{rema}
\label{deligne}
Theorem \ref{bedirsum} is reminiscent of the following result of Chevalley: if $G$ is any group and if $X$ and $Y$ are finite dimensional semi-simple characteristic $0$ representations of $G$, then $X \otimes Y$ is also semi-simple. The same holds for semi-linear representations and, more generally, in any Tannakian category over a field of characteristic  $0$ \cite{PD16}.
\end{rema}

\section{Cohomology of $B$-pairs} 

The cohomology of $\BB^{\otimes E}_{\mid K}$-pairs is defined and studied in \S 2.1 of \cite{KN09}. We recall what we need. Let $W$ be a $\BB^{\otimes E}_{\mid K}$-pair. Nakamura constructs an $E$-vector space $H^1(G_K,W)$ that has the following properties
\begin{enumerate}
\item $H^1(G_K,W) = \Ext^1(\BB,W)$ (i.e. it classifies the extensions of $\BB^{\otimes E}_{\mid K}$-pairs);
\item there is an exact sequence of $E$-vector spaces
\[  W_{\dr}^{G_K} \to H^1(G_K,W) \to H^1(G_K,W_e) \oplus H^1(G_K,W_{\dr}^+). \]
\end{enumerate}

If $W$ is a rank $1$ $\BB^{\otimes E}_{\mid K}$-pair with $W_e \in \Delta(\Qp)$, then $W_{\dr}^{G_{\Qp}}$ is an $E$-vector space of dimension $1$ or $0$, depending on whether $W_e$ (extended to $G_{\Qp}$) is de Rham or not. Since $W_{\dr}^{G_K} = K \otimes_{\Qp} W_{\dr}^{G_{\Qp}}$, this implies that $W_{\dr}^{G_K} = \{0\}$ if $W$ is not de Rham. Note that if $W$ is a rank $1$ $\BB^{\otimes E}_{\mid K}$-pair with $K \neq \Qp$, then $W$ may be ``partially de Rham'' in the sense of \cite{YD17}, so that in general $W_{\dr}^{G_K}$ can be non-zero even if $W$ is not de Rham.

\begin{prop}
\label{drinj}
If $W_{\dr}^+$ is a free $E \otimes_{\Qp} \bdr^+$-representation of $G_K$ of rank $1$, the map $H^1(G_K,W_{\dr}^+) \to H^1(G_K,W_{\dr})$ is injective.
\end{prop}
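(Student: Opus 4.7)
The plan is to use the long exact sequence of continuous $G_K$-cohomology attached to
\[ 0 \to W_{\dr}^+ \to W_{\dr} \to W_{\dr}/W_{\dr}^+ \to 0. \]
From this, the kernel of $H^1(G_K, W_{\dr}^+) \to H^1(G_K, W_{\dr})$ is the image of the connecting map $(W_{\dr}/W_{\dr}^+)^{G_K} \to H^1(G_K, W_{\dr}^+)$, so the statement reduces to showing that every $G_K$-invariant of $W_{\dr}/W_{\dr}^+$ lifts to a $G_K$-invariant of $W_{\dr}$; equivalently, that the natural map $(W_{\dr})^{G_K} \to (W_{\dr}/W_{\dr}^+)^{G_K}$ is surjective.

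I would then reduce to the case $K = E$ via inflation-restriction for $G_E \triangleleft G_K$: because $\Gal(E/K)$-cohomology vanishes on characteristic zero coefficients, injectivity over $G_K$ follows from injectivity over $G_E$. Once $K = E$, the ring decomposition $E \otimes_{\Qp} \bdr^+ = \prod_{\sigma \in \Emb} \bdr^+$ is $G_E$-equivariant, so $W_{\dr}^+$ splits as a product of rank $1$ $\bdr^+$-representations of $G_E$, reducing the problem componentwise to that of a single rank $1$ $\bdr^+$-representation $M$ of $G_E$. Writing $M = \bdr^+ \cdot e$ with cocycle $g \mapsto \mu(g) \in (\bdr^+)^\times$, I would set $V := M/tM$, a rank $1$ $\mathbf{C}_p$-semilinear $G_E$-representation, and filter $M[1/t]/M = \bigcup_{n \geq 1} t^{-n}M/M$, whose successive graded quotients are the Tate twists $V(-n)$.

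The final step would use Tate-Sen vanishing: $H^0(G_E, \mathbf{C}_p(\eta)) = H^1(G_E, \mathbf{C}_p(\eta)) = 0$ for any character $\eta$ of non-zero Hodge-Tate weight. Applied to the graded pieces, this forces $(M[1/t]/M)^{G_E}$ to be concentrated at the single integer $n_0$ equal to the Hodge-Tate weight of $V$ (if it is an integer; otherwise the invariants vanish entirely and the claim is immediate). A given invariant at level $n_0$ is then lifted to a $G_E$-invariant element of $t^{-n_0}M \subset M[1/t]$ by iterative $t$-adic approximation: one successively adjusts the partial lift modulo higher powers of $t$, at each stage killing the obstruction (lying in $H^1$ of a twist of $\mathbf{C}_p$ of non-zero Hodge-Tate weight) by Tate-Sen, and invoking the $t$-adic completeness of $\bdr^+$ to assemble the approximations into a genuine $G_E$-invariant.

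The main obstacle I anticipate is precisely this iterative lifting step: at each stage of the iteration the obstruction class must be identified in the correct cohomology group, and Tate-Sen vanishing must be invoked at the correct Hodge-Tate twist. Once these vanishings are verified the convergence in the $t$-adic topology is routine, and taking $\Gal(E/K)$-invariants propagates the injectivity back up to $G_K$.
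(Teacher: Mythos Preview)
Your approach is correct and matches the paper's proof closely. Both proofs reduce via the long exact sequence to the surjectivity of $(W_{\dr})^{G_K} \to (W_{\dr}/W_{\dr}^+)^{G_K}$, then pass to a subgroup over which $E \otimes_{\Qp} \bdr^+$ splits (the paper restricts to $G_L$ with $E \subset L$; you restrict to $G_E$, which is the same since one may take $L=E$), and finally treat a single rank $1$ $\bdr^+$-representation. The only difference is in this last step: the paper outsources it, citing lemma~3.8.1 of Bloch--Kato for the de Rham case and proposition~3.21 of Fontaine for the non--de Rham case, whereas you unwind the argument by hand via Tate--Sen vanishing and $t$-adic successive approximation. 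Your iterative lifting is exactly the content of the Bloch--Kato lemma, and your ``non-integral weight'' case is exactly the Fontaine vanishing. The obstruction bookkeeping you flag as the main worry does work out: once the invariants sit at level $n_0 \geq 1$, every graded piece $t^kM/t^{k+1}M$ for $k \geq 0$ has Hodge--Tate weight $\geq n_0 > 0$, so the $H^1$'s genuinely vanish and the correction terms $m_k \in t^kM$ converge $t$-adically.
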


\begin{proof}
Since we have an exact sequence
\[ W_{\dr}^{G_K} \to (W_{\dr}/W_{\dr}^+)^{G_K} \to H^1(G_K,W_{\dr}^+) \to H^1(G_K,W_{\dr}), \]
it is enough to show that $W_{\dr}^{G_K} \to (W_{\dr}/W_{\dr}^+)^{G_K}$ is surjective. To prove this, we can replace $K$ by a finite extension $L$, and in particular we can assume that $L$ contains $E$. In this case, $W_{\dr}^+ {\mid}_{G_L}$ is a direct sum of rank $1$ $\bdr^+$-representations of $G_L$. 

Let $X_{\dr}^+$ be a rank $1$ $\bdr^+$-representation of $G_L$. The $L$-vector space $X_{\dr}^{G_L}$ is of dimension $0$ or $1$. If $\dim_L X_{\dr}^{G_L} = 1$, then $X_{\dr}$ is de Rham, and the map $X_{\dr}^{G_L} \to (X_{\dr}/X_{\dr}^+)^{G_L}$ is surjective by the same argument as in lemma 3.8.1 of \cite{BK90} (see lemma 2.6 of \cite{KN09}). If $\dim_L X_{\dr}^{G_L} = 0$, then for every $i \in \ZZ$, we have $(t^i X_{\dr}^+ / t^{i+1} X_{\dr}^+)^{G_L} = 0$ by proposition 3.21 of \cite{F04}. This implies that $(X_{\dr} / X_{\dr}^+)^{G_L} = 0$, so that the map $X_{\dr}^{G_L} \to (X_{\dr}/X_{\dr}^+)^{G_L}$ is also surjective.
\end{proof}

\begin{coro}
\label{dirsum}
If $X$ is a direct sum of rank $1$ $\BB^{\otimes E}_{\mid K}$-pairs, the map $H^1(G_K,X_{\dr}^+) \to H^1(G_K,X_{\dr})$ is injective.
\end{coro}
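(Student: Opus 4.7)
The plan is to reduce Corollary \ref{dirsum} directly to the rank $1$ case already handled in Proposition \ref{drinj}. Write $X = \bigoplus_{i=1}^r X_i$ with each $X_i$ a rank $1$ $\BB^{\otimes E}_{\mid K}$-pair. Then by the definition of the de Rham components of a direct sum of $B$-pairs, $X_{\dr}^+ = \bigoplus_{i=1}^r (X_i)_{\dr}^+$ and $X_{\dr} = \bigoplus_{i=1}^r (X_i)_{\dr}$, and each $(X_i)_{\dr}^+$ is a free $E \otimes_{\Qp} \bdr^+$-representation of $G_K$ of rank $1$.

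Next I would use the fact that continuous group cohomology commutes with finite direct sums of topological $G_K$-modules, so that
\[ H^1(G_K, X_{\dr}^+) = \bigoplus_{i=1}^r H^1(G_K, (X_i)_{\dr}^+) \qquad \text{and} \qquad H^1(G_K, X_{\dr}) = \bigoplus_{i=1}^r H^1(G_K, (X_i)_{\dr}), \]
and the natural map from the first to the second is the direct sum of the maps $H^1(G_K, (X_i)_{\dr}^+) \to H^1(G_K, (X_i)_{\dr})$. Each of these is injective by Proposition \ref{drinj}, so their direct sum is injective as well, which is the desired conclusion.

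There is no real obstacle here: the only substantive input is the rank $1$ injectivity statement of Proposition \ref{drinj}, and the compatibility of the long exact sequence in cohomology with finite direct sums is formal. The only thing worth verifying carefully is that taking the de Rham components of a direct sum of $\BB^{\otimes E}_{\mid K}$-pairs really does commute with the direct sum, which is immediate from the definition since the tensor products defining $X_{\dr}$ and $X_{\dr}^+$ are taken over $\bee$ and $E \otimes_{\Qp} \bdr^+$ respectively and commute with finite direct sums.
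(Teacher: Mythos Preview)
Your argument is correct and is exactly the implicit reasoning the paper intends: the corollary is stated without proof, as an immediate consequence of Proposition \ref{drinj} via the compatibility of cohomology with finite direct sums.
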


Recall that every rank $1$ $\BB^{\otimes E}_{\mid K}$-pair is of the form $\BB(\delta)$ for a unique $\delta : K^\times \to E^\times$.

\begin{prop}
\label{bpairsplit}
If a $\BB^{\otimes E}_{\mid K}$-pair $W$ is split $\Delta(\Qp)$-triangulable, with subquotients $\{ \BB(\delta_i) \}_i$ such that $\BB(\delta_i \delta_j^{-1})$ is not de Rham for any $i \neq j$, and if the corresponding triangulation of $W_e$ splits as a direct sum of $1$-dimensional $\bee$-representations, then the triangulation of $W$ splits.
\end{prop}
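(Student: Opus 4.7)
I would proceed by induction on the rank $n$ of $W$, with $n=1$ trivial. For the inductive step, consider the final stage of the triangulation
$$ 0 \to W' \to W \to \BB(\delta_n) \to 0, $$
where $W'$ has rank $n-1$ and inherited triangulation with graded pieces $\{\BB(\delta_i)\}_{i<n}$. All the hypotheses of the proposition pass to $W'$: split $\Delta(\Qp)$-triangulability and the non-de-Rham conditions are inherited, and a splitting $W_e \cong \bigoplus_i \BB(\delta_i)_e$ restricts to a splitting of the triangulation of $W_e'$. By induction, $W' \cong \bigoplus_{i<n} \BB(\delta_i)$. It then suffices to show that the displayed extension splits, i.e.\ that its class $c \in H^1(G_K, X)$ vanishes, where $X := W' \otimes \BB(\delta_n)^{-1} \cong \bigoplus_{i<n} \BB(\delta_i \delta_n^{-1})$.

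\textbf{Main argument.} To show $c = 0$, I would use the exact sequence from property (2) of $H^1$ recalled at the start of \S 4:
$$ X_{\dr}^{G_K} \to H^1(G_K, X) \to H^1(G_K, X_e) \oplus H^1(G_K, X_{\dr}^+), $$
and check that the image of $c$ vanishes in both summands on the right while the left term is zero. The image in $H^1(G_K, X_e)$ vanishes because the $\bee$-triangulation of $W_e$ splits by hypothesis. For the image in $H^1(G_K, X_{\dr}^+)$, note that $X$ is a direct sum of rank $1$ $B$-pairs, so Corollary \ref{dirsum} makes this cohomology inject into $H^1(G_K, X_{\dr})$; and the class there corresponds to the extension on the $E\otimes_{\Qp}\bdr$-side, which arises from the already-split $\bee$-extension by applying $-\otimes_{\bee}(E \otimes_{\Qp}\bdr)$ and so is split as well. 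Finally, each $\BB(\delta_i\delta_n^{-1})$ has its $\bee$-component in $\Delta(\Qp)$ (this set is stable under product and inverse) and is not de Rham by hypothesis, so the discussion just before Proposition \ref{drinj} gives $\BB(\delta_i\delta_n^{-1})_{\dr}^{G_K} = 0$, whence $X_{\dr}^{G_K} = 0$. The exact sequence then forces $c=0$ and so $W \cong \bigoplus_{i \leq n} \BB(\delta_i)$.

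\textbf{Where the weight lies.} There is no deep obstacle, but three independent vanishings must cooperate, each coming from a different hypothesis: the $\bee$-part from the splitting assumption, the $\bdr^+$-part from Corollary \ref{dirsum} combined with the base-change observation, and the $X_{\dr}^{G_K}$-part from the non-de-Rham condition. I expect the $\bdr^+$-step to be the most delicate to record cleanly, since it relies on the injectivity in Corollary \ref{dirsum} (which needs that $X$ is genuinely a direct sum of rank $1$ pieces, available only after the induction hypothesis has been applied to $W'$) together with the fact that splittings of $\bee$-extensions are preserved by base change to $E\otimes_{\Qp}\bdr$. Conceptually, the non-de-Rham hypothesis is what is indispensable: without it, classes coming from $X_{\dr}^{G_K}$ could produce a non-split $B$-pair even when the $\bee$-picture is split.
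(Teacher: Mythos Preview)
Your proof is correct and follows essentially the same route as the paper's: both argue by induction, reducing to the splitting of an extension $0 \to W' \to W_j \to \BB(\delta_j) \to 0$ with $W'$ already a direct sum of rank~$1$ pieces, and both establish this splitting via the exact sequence for $H^1$ by checking that the $\bee$-class vanishes (hypothesis), the $\bdr^+$-class vanishes (Corollary~\ref{dirsum} plus base change from the split $\bee$-extension), and $X_{\dr}^{G_K}=0$ (the non-de-Rham condition together with $\Delta(\Qp)$). The only cosmetic difference is that the paper inducts on the filtration index $j$ inside a fixed $W$, while you induct on the rank of $W$ and peel off the top piece; the key step is identical.
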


\begin{proof}
Let $0 = W_0 \subset W_1 \subset \cdots \subset W_d = W$ be the given triangulation of $W$. We prove by induction on $j$ that $W_j= \BB(\delta_1) \oplus \cdots \oplus \BB(\delta_j)$. This is true for $j=1$, assume it holds for $j-1$. Write $0 \to W_{j-1} \to W_j \to \BB(\delta_j) \to 0$ and $W_{j-1} = \BB(\delta_1) \oplus \cdots \oplus \BB(\delta_{j-1})$. Let $X=W_{j-1} (\delta_j^{-1})$ and $Y = W_j (\delta_j^{-1})$. The $\BB^{\otimes E}_{\mid K}$-pair $Y$ corresponds to a class in $H^1(G_K,X)$. The $\bee$-representation $Y_e$ is split, and therefore so is $Y_{\dr}$. By corollary \ref{dirsum}, so is $Y_{\dr}^+$. The class of $Y$ in $H^1(G_K,X)$ is therefore in the kernel of $H^1(G_K,X) \to H^1(G_K,X_e) \oplus H^1(G_K,X_{\dr}^+)$. Since $X_{\dr}^{G_K}=0$ by hypothesis, Nakamura's exact sequence (2) above implies that the class of $Y$ is trivial and hence $W_j = W_{j-1} \oplus \BB(\delta_j)$. The proposition follows by induction. 
\end{proof}

\section{Proof of the main theorem}

In this section, we prove theorem B. Let $F$ be a finite extension of $E$ of degree $\geq 2$, and write $F \otimes_E F = \oplus_i F_i$. There are at least two summands since $F$ itself is one of them.

\begin{prop}
\label{extfe}
Let $F/E$ be as above, and let $W$ be an $F$-linear representation of $G_K$. We have $F \otimes_E W = \oplus_i (F_i \otimes_F W)$ as $F$-linear representations of $G_K$.
\end{prop}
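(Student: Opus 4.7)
The proposition is essentially formal once one sorts out which $F$-actions are in play, so my plan is to make the bookkeeping explicit and let the decomposition of $F \otimes_E F$ do the work.

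First I would observe that $F \otimes_E W$ carries two commuting $F$-module structures: one coming from the left tensor factor (this is the $F$-linear structure referred to in the statement), and one coming from the existing $F$-action on $W$. Together they make $F \otimes_E W$ into a module over $F \otimes_E F$, where $a \otimes b$ acts on $c \otimes w$ by $(ac) \otimes (bw)$. The $G_K$-action is $F \otimes_E F$-linear because $F/\Qp$ is a constant field extension on which $G_K$ acts trivially, so the entire action lives on the $W$-factor.

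Next I would use the canonical isomorphism
\[ F \otimes_E W \;\cong\; (F \otimes_E F) \otimes_F W, \]
where on the right the tensor product over $F$ uses the right-hand copy of $F$ in $F \otimes_E F$. Applying the algebra decomposition $F \otimes_E F = \bigoplus_i F_i$ and distributing the tensor product then gives
\[ (F \otimes_E F) \otimes_F W \;=\; \Bigl(\bigoplus_i F_i\Bigr) \otimes_F W \;=\; \bigoplus_i (F_i \otimes_F W), \]
where each $F_i \otimes_F W$ is formed using the $F$-action on $F_i$ inherited from the right factor of $F \otimes_E F$.

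Finally I would check compatibilities: the left-factor $F$-action descends to the $F$-action on each $F_i$ coming from the other embedding $F \hookrightarrow F \otimes_E F \twoheadrightarrow F_i$, so the decomposition is one of $F$-modules in the sense of the statement; and since the $G_K$-action touches only $W$, it is automatically preserved summand by summand, making each $F_i \otimes_F W$ a $G_K$-stable $F$-linear subrepresentation. The only potential source of confusion — and thus the only step worth writing down carefully — is keeping the two $F$-actions straight throughout; there is no genuine obstacle beyond this bookkeeping.
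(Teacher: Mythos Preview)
Your proof is correct and is exactly the paper's approach, just written out in more detail: the paper's proof is the single line $F \otimes_E W = (F \otimes_E F) \otimes_F W = \oplus_i (F_i \otimes_F W)$, and your argument is a careful unpacking of precisely this chain of identifications together with the (straightforward) verification of $F$- and $G_K$-compatibility.
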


\begin{proof}
We have $F \otimes_E W = (F \otimes_E F) \otimes_F W = \oplus_i (F_i \otimes_F W)$.
\end{proof}

\begin{coro}
\label{notabsirr}
If $W$ is a $\bee$-representation of $G_K$ that has an $F$-linear structure, then $W$ becomes reducible after extending scalars from $E$ to $F$.
\end{coro}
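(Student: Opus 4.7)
The plan is to reduce the statement directly to Proposition \ref{extfe}. By hypothesis, $W$ carries an $F$-linear structure that commutes with both the $G_K$-action and the $\bee$-action; in other words, $W$ is naturally an $F \otimes_E \bee$-module. Viewing $W$ temporarily as just an $F$-linear representation of $G_K$ (forgetting the $\bee$-action), Proposition \ref{extfe} immediately yields a decomposition
\[ F \otimes_E W = \bigoplus_i (F_i \otimes_F W) \]
as $F$-linear representations of $G_K$. As recorded at the start of the section, $[F:E] \geq 2$ forces the \'etale $F$-algebra $F \otimes_E F$ to split into at least two field components (one of them being $F$ itself), and assuming $W \neq 0$ each summand $F_i \otimes_F W$ is nonzero.

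The remaining step is to upgrade this from a decomposition of $F$-linear $G_K$-representations to one of sub $(F \otimes_E \bee)$-representations. This is automatic: the summands appearing in Proposition \ref{extfe} are the isotypic components under the two commuting $F$-actions on $F \otimes_E W$, namely the external one from extension of scalars and the internal one from the $F$-structure on $W$. Both of these $F$-actions commute with the $\bee$-action by hypothesis, so the projections onto the summands are $F \otimes_E \bee$-linear. Consequently each $F_i \otimes_F W$ is stable under $F \otimes_E \bee$, which exhibits $F \otimes_E W$ as a direct sum of at least two nonzero sub $(F \otimes_E \bee)$-representations, and in particular as a reducible $(F \otimes_E \bee)$-representation of $G_K$.

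There is no substantive obstacle here; the corollary is essentially a semantic consequence of Proposition \ref{extfe}. The only bookkeeping is the compatibility of the $F \otimes_E F$-module decomposition of $F \otimes_E W$ with the $\bee$-action, which is immediate from the hypothesis that the $F$-linear structure on $W$ commutes with the $\bee$-action.
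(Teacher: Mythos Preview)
Your argument is correct and matches the paper's approach: the paper gives no separate proof for this corollary, treating it as an immediate consequence of Proposition~\ref{extfe}, and you have simply spelled out the one detail left implicit there, namely that the idempotents of $F\otimes_E F$ commute with the $\bee$-action because the $F$-structure on $W$ does.
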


Let us say that a $\BB^{\otimes E}_{\mid K}$-pair $W$ is completely irreducible if $(F \otimes_E W)|_{G_L}$ is an irreducible $\BB^{\otimes F}_{\mid L}$-pair for all finite extensions $F$ of $E$ and $L$ of $K$.

\begin{prop}
\label{tensirr}
If $K=E$ and if $X$ and $Y$ are two completely irreducible $\BB^{\otimes E}_{\mid K}$-pairs such that $\Hom(X,Y)$ is split $\Delta(\Qp)$-triangulable, then $X$ and $Y$ are of rank $1$.
\end{prop}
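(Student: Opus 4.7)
The plan is to argue by contradiction: assume $\operatorname{rk} X = m \geq 2$ (the case of $Y$ is symmetric) and exhibit too many $G_L$-equivariant endomorphisms of a suitable base change of $X$, contradicting complete irreducibility.

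First, reduce to the case $Y_e = X_e$ with $\End(X_e)$ split $\Delta(\Qp)$-triangulable. The leading rank-$1$ sub $\BB(\delta') \hookrightarrow \Hom(X,Y)$ of the triangulation produces a non-zero $\bee$-linear $f \colon X_e \to Y_e$ with $g(f) = \delta'(g) f$. By proposition \ref{weirred}, $X_e$ and $Y_e$ are irreducible, and the twisted equivariance makes $\ker f$ and the saturation of $\operatorname{im}(f)$ into $G_E$-stable submodules; proposition \ref{suberep} together with the irreducibility of $X_e$ and $Y_e$ forces $f$ to be an isomorphism. Replacing $Y$ by $Y \otimes \BB(\delta')^{-1}$ (still completely irreducible; the Hom still split $\Delta(\Qp)$-triangulable since $\Delta(\Qp)$ is closed under tensor products) we reach $Y_e = X_e$, and theorem \ref{bedirsum} then gives $\End(X_e) = \bigoplus_{i=1}^{m^2} \bee(\delta_i)$ with each $\delta_i \in \Delta(\Qp)$.

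For any non-zero $\phi \in \bee(\delta_i) \subset \End(X_e)$, the same kernel/image argument shows that $\phi$ is a $\bee$-linear isomorphism, so $\det \phi \in \bee^\times$; taking determinants in $g(\phi) = \delta_i(g)\phi$ yields $g(\det\phi) = \delta_i(g)^m \det\phi$, whence $\delta_i^m$ is $\bee$-admissible. Writing $\delta_i^m(g) = g(y_i)/y_i$ with $y_i \in \bee^\times$ in the form of proposition \ref{beunit}, the technical heart is to trivialize all $\delta_i$ simultaneously over a finite extension. After enlarging $E$ to a suitable finite extension $E'/E$ containing $m$-th roots of the Lubin--Tate uniformizer (making $t_\tau^{1/m}$ available), of the relevant Frobenius eigenvalues, and of the residual scalars in $E^\times$, each $y_i$ has an $m$-th root $z_i \in \bee_{E'}^\times$. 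The ratio $g \mapsto \delta_i(g) (g(z_i)/z_i)^{-1}$ is then a $\mu_m$-valued character of $G_{E'}$; letting $L$ be the common fixed field of the kernels of these $m^2$ characters (finite over $E'$), each $\bee_{E'}(\delta_i)|_{G_L}$ becomes the trivial $\bee_{E'}$-representation of $G_L$.

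For the contradiction, $\End\bigl((E' \otimes_E X)|_{G_L}\bigr)_e = \bigoplus_i \bee_{E'}(\delta_i)|_{G_L} \cong \bee_{E'}^{\,m^2}$ has $G_L$-invariants of $E'$-dimension $m^2$. On the other hand, complete irreducibility of $X$ implies $(E' \otimes_E X)|_{G_L}$ is an irreducible $\BB^{\otimes E'}_{\mid L}$-pair, and a Schur-type argument via proposition \ref{fracbeinv} shows its $G_L$-equivariant endomorphism space has $E'$-dimension $1$. For $m \geq 2$ this is a contradiction, so $\operatorname{rk} X = \operatorname{rk} Y = 1$. The main obstacle is the $m$-th root extraction: one must carefully verify that appropriate $E'$ and $L$ can be chosen so that the Lubin--Tate, unit, and scalar factors of each $y_i$ simultaneously become $m$-th powers in $\bee_{E'}^\times$, while preserving complete irreducibility under base change.
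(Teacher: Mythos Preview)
Your reduction (first paragraph) and the final Schur step are fine, and they match the paper's strategy. The genuine gap is the $m$-th root extraction. You assert that after enlarging $E$ to $E'$ (adjoining $\pi^{1/m}$, etc.) each $y_i\in\bee^\times$ acquires an $m$-th root $z_i\in\bee_{E'}^\times$. This is false in general. By proposition~\ref{beunit}, every unit of $\bee_{E'}$ has the shape $e'\cdot\prod_{\tau'} t_{\tau',E'}^{n'_{\tau'}}\cdot u'$ with $n'_{\tau'}\in\ZZ$ and $\sum n'_{\tau'}=0$; hence an $m$-th power has all $t_{\tau'}$-exponents divisible by $m$. But when one rewrites, say, $y_i=t_{\tau_1,E}/t_{\tau_2,E}$ in terms of the $t_{\tau',E'}$ (via the norm relation for Lubin--Tate characters one gets $t_{\tau,E}=(\text{unit})\cdot\prod_{\tau'\mid\tau}t_{\tau',E'}$), the exponents are $\pm 1$, not multiples of $m$. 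Passing to $E(\pi^{1/m})$ does not produce $t_\tau^{1/m}$ inside $\bee_{E'}$; the Lubin--Tate period of $E'$ is not an $m$-th root of that of $E$. In character language: you only know that $\delta_i^m$ is $\bee$-admissible, hence de Rham with weight-sum $0$ (corollary~\ref{beadm}); the Hodge--Tate--Sen weights of $\delta_i$ are therefore in $\tfrac{1}{m}\ZZ$ but need not be integers, so $\delta_i$ need not be de Rham, and then $\bee_{E'}(\delta_i)|_{G_L}$ can never be trivial for any $E',L$ (again by corollary~\ref{beadm}). Your own last sentence flags exactly this obstacle, and it cannot be circumvented by the method you sketch.

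The paper supplies the missing idea by leaving the $\bee$-level and using the full $B$-pair structure. It argues by dichotomy: if no $\BB(\delta_i\delta_j^{-1})$ is de Rham ($i\neq j$), then proposition~\ref{bpairsplit} upgrades the $\bee$-splitting of $\End(X_e)$ to a splitting of $\Hom(X,Y)$ as a $\BB^{\otimes E}_{\mid K}$-pair, and the slope calculus (proposition~\ref{sloprop}) forces each $\BB(\delta_i)$ to induce an isomorphism $X\simeq Y(\delta_i^{-1})$ of $B$-pairs, whence $(\delta_i\delta_j^{-1})^n=1$ and all $\BB(\delta_i\delta_j^{-1})$ are de Rham after all---a contradiction. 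In the remaining case some $\mu_i\mu_j^{-1}$ is de Rham, and now corollary~\ref{beadm} applies honestly to write it as $(\bee$-admissible$)\times(\text{potentially unramified})$; killing the unramified part over a finite $L$ yields two $\bee$-independent $G_L$-maps $X_e\to X_e(\mu_1\mu_j^{-1})$, so $\End(X_e)^{G_L}\supsetneq E$, contradicting complete irreducibility via corollary~\ref{notabsirr}. In short, the cohomological input (proposition~\ref{bpairsplit}) and the slope argument are what make the reduction to the de Rham case possible; your purely $\bee$-level argument cannot reach that reduction.
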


\begin{proof}
Let $\{ \BB(\delta_i) \}_i$ be the rank $1$ subquotients of the triangulation of $\Hom(X,Y)$. We have an inclusion $\BB(\delta_1) \subset \Hom(X,Y)$. This gives rise to a non-zero map $X \to Y(\delta_1^{-1})$ of $\BB^{\otimes E}_{\mid K}$-pairs. Write $\BB(\delta_1)_e = \bee(\mu_1)$ for some $\mu_1 : G_K \to \bee^\times$ (recall that $K=E$). Since $X$ and $Y$ are irreducible, $X_e$ and $Y_e$ are irreducible $\bee$-representations of $G_K$ (proposition \ref{weirred}), and the map $X_e \to Y_e(\mu_1^{-1})$ is therefore an isomorphism. This implies that $\Hom(X_e,Y_e) = \End(X_e)(\mu_1)$, so that $\End(X_e)$ is split triangulable. By theorem \ref{bedirsum}, the triangulation of $\End(X_e)$ splits. The triangulation of $\Hom(X_e,Y_e) = \End(X_e)(\mu_1)$ therefore also splits. Let $n$ be the common rank of $X$ and $Y$.

Suppose that none of the $\BB(\delta_i \delta_j^{-1})$ are de Rham for any $i \neq j$. By proposition \ref{bpairsplit} applied to $W=\Hom(X,Y)$, the triangulation of $\Hom(X,Y)$ splits. We can therefore write $\Hom(X,Y) = \oplus_i \BB(\delta_i)$. Since $X$ and $Y$ are both irreducible, they are pure of some slopes $s$ and $t$ by theorem \ref{slopefil}. The $\BB^{\otimes E}_{\mid K}$-pair $\Hom(X,Y)$ is then pure of slope $t-s$ by (1) of proposition \ref{sloprop}. By (3) of ibid, each of the $\BB(\delta_i)$ is also pure of slope $t-s$. Each $\BB(\delta_i)$ gives rise to a map $X \to Y(\delta_i^{-1})$, which is an isomorphism of $\BB^{\otimes E}_{\mid K}$-pairs by (2) of ibid, since $X$ and $Y(\delta_i^{-1})$ are both pure of slope $s$. By taking determinants, we get $\delta_i^n = \det(Y)\det(X)^{-1}$ for every $i$. This implies that $(\delta_i \delta_j^{-1})^n=1$ so that $\delta_i \delta_j^{-1}$ is of finite order, and $\BB(\delta_i \delta_j^{-1})$ is de Rham (lemma 4.1 of \cite{KN09}), contradicting our assumption.

Therefore, one of the $\BB(\delta_i \delta_j^{-1})$ is de Rham for some $i \neq j$. Write $\BB(\delta_k)_e = \bee(\mu_k)$ where the $\mu_k$ are characters $G_K \to E^\times$ (recall that $K=E$), so that $\End(X_e)(\mu_1) = \oplus_k \bee(\mu_k)$ as $\bee$-representations of $G_K$. The fact that $\BB(\delta_i \delta_j^{-1})$ is de Rham implies that $\mu_i \mu_j^{-1}$ is de Rham. We then have $X_e = X_e(\mu_1 \mu_i^{-1}) = X_e(\mu_1 \mu_j^{-1})$, so that $X_e=X_e(\mu_i \mu_j^{-1})$. By taking determinants, we find  that $\BB_{e,E}((\mu_i \mu_j^{-1})^n) = \BB_{e,E}$ and therefore by corollary \ref{beadm}, $(\mu_i \mu_j^{-1})^n : G_K \to E^\times$ is de Rham and the sum of its weights is $0$. This implies that the sum of the weights of $\mu_i \mu_j^{-1} : G_K \to E^\times$ is $0$. By corollary \ref{beadm}, $\mu_i \mu_j^{-1} = \chi \eta$ with $\chi : G_K \to E^\times$ a $\BB_{e,E}$-admissible character and $\eta : G_K \to E^\times$ potentially unramified. Since $X_e(\chi \eta)=X_e$ and $X_e(\chi)=X_e$, we get $X_e(\eta) = X_e$. By taking determinants, we get that $\eta^n$ is $\BB_{e,E}$-admissible. Since $\eta^n$ is also potentially unramified, and $\bee \cap (\Qpbar \cdot \Qpnrhat) = E$, it is trivial. Hence $\eta$ is a character of finite order of $G_K$, and so there exists a finite extension $L$ of $K$ such that $\mu_i=\chi \mu_j$ on $G_L$.

The space $\End(X_e)(\mu_1)$ contains $\bee(\mu_j) \oplus \bee(\mu_i)$, which is isomorphic to $\bee(\mu_j) \oplus \bee(\mu_j)$ after restricting to $G_L$. Let $f$ and $g$ be the two resulting isomorphisms $X_e \to X_e(\mu_1 \mu_j^{-1})$. The map $h = f^{-1} \circ g : X_e \to X_e$ is $G_L$-equivariant and is not in $E^\times \cdot \Id$ since $f$ and $g$ are $\bee$-linearly independent. Therefore, $\End(X_e)^{G_L}$ is strictly larger than $E$. 

Since $X_e|_{G_L}$ is irreducible, Schur's lemma and corollary \ref{wegkinv} imply that $\End(X_e)^{G_L}$ contains a field $F$ such that $[F:E] \geq 2$ (for example, $F=E[h]$). Hence $X_e |_{G_L}$ has an $F$-linear structure. Corollary \ref{notabsirr} implies that $(F \otimes_E X_e) |_{G_L}$ is reducible. By proposition \ref{weirred}, $X$ is not completely irreducible. This is a contradiction, so $X$ had to be of rank $1$. Since $X$ and $Y$ have the same rank, we are done.
\end{proof}

We now recall and prove theorem B. A strict sub-quotient of a $\BB^{\otimes E}_{\mid K}$-pair is a quotient of a saturated sub $\BB^{\otimes E}_{\mid K}$-pair.

\begin{theo}
\label{maintheo}
If $X$ and $Y$ are two non-zero $\BB^{\otimes E}_{\mid K}$-pairs whose tensor product is $\Delta(\Qp)$-triangulable, then $X$ and $Y$ are both potentially triangulable.
\end{theo}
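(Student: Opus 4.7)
The plan is to induct on $\mathrm{rk}(X)+\mathrm{rk}(Y)$, with proposition \ref{tensirr} supplying the only non-formal input. First I would normalize the setup: since both the hypothesis and the conclusion are preserved under enlarging $E$ and restricting to $G_L$ for a finite $L/K$, I would enlarge $E$ to a finite extension over which the $\Delta(\Qp)$-triangulation of $X\otimes Y$ becomes split, and then restrict from $G_K$ to $G_E$. So I may assume $K=E$ and that $X\otimes Y$ is split $\Delta(\Qp)$-triangulable; a character of $G_K$ that extends to $G_{\Qp}$ still does so after these operations.

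The base case is $\mathrm{rk}(X)=1$ (and symmetrically $\mathrm{rk}(Y)=1$): then $Y\simeq X^\vee\otimes(X\otimes Y)$ inherits a split triangulation from that of $X\otimes Y$, so $Y$ is triangulable and $X$ is trivially so. Assume henceforth $\mathrm{rk}(X),\mathrm{rk}(Y)\geq 2$. If one of them, say $X$, is not completely irreducible in the sense of proposition \ref{tensirr}, then after a further base change to some $F/E$ and $L/K$ the pair $X$ acquires a proper saturated sub-pair $X_1\subsetneq X$. Setting $X_2=X/X_1$, the short exact sequence
\[ 0\to X_1\otimes Y\to X\otimes Y\to X_2\otimes Y\to 0 \]
together with proposition \ref{extriang} shows that $X_1\otimes Y$ and $X_2\otimes Y$ are still split $\Delta(\Qp)$-triangulable. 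By induction on total rank applied to the pairs $(X_1,Y)$ and $(X_2,Y)$, each of $X_1$, $X_2$ and $Y$ is potentially triangulable, and so is $X$, as a successive extension of $X_1$ by $X_2$.

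It remains to rule out the case where $X$ and $Y$ are both completely irreducible and both of rank $\geq 2$. For this I would use the identification $X\otimes Y\simeq\Hom(X^\vee,Y)$, observe that complete irreducibility is preserved under duality, and apply proposition \ref{tensirr} to the pair $(X^\vee,Y)$: its conclusion would force $X^\vee$ and $Y$ (hence $X$ and $Y$) to have rank $1$, contradicting the standing assumption. Therefore this case cannot occur, and the induction is complete.

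The main obstacle is proposition \ref{tensirr} itself, where the substantive $p$-adic Hodge theory of $\bee$-representations (slope theory, classification of rank $1$ objects, de Rham sub-quotients, Schur-type arguments) genuinely enters. Once that is granted, the reduction above is purely formal, relying only on proposition \ref{extriang} and the good base-change behaviour of split $\Delta(\Qp)$-triangulability.
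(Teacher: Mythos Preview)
Your argument is correct and follows the same strategy as the paper: reduce to completely irreducible constituents via proposition \ref{extriang}, then invoke proposition \ref{tensirr} on $\Hom(X^\vee,Y)=X\otimes Y$ to force rank $1$. The only difference is organizational: the paper passes in one step to a single extension $F=L$ over which both $X$ and $Y$ admit filtrations with completely irreducible graded pieces, and then applies proposition \ref{tensirr} to every pair $(X',Y')$ of such pieces; you achieve the same end by an induction on $\operatorname{rk}(X)+\operatorname{rk}(Y)$, splitting off one sub-pair at a time. Your inductive packaging has the minor advantage of making it unnecessary to argue that a single finite base change renders all Jordan--H\"older constituents completely irreducible (the paper asserts this but does not spell out the finiteness argument), while the paper's version is more concise. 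Your separate treatment of the rank $1$ base case via $Y\simeq X^\vee\otimes(X\otimes Y)$ is a harmless shortcut; the paper's argument absorbs it into the general case since rank $1$ objects are automatically completely irreducible.
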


\begin{proof}
We can replace $E$ and $K$ by finite extensions $F$ and $L$ if necessary, and write $X$ and $Y$ as successive extensions of completely irreducible $\BB^{\otimes F}_{\mid L}$-pairs with $F=L$. If $X'$ and $Y'$ are two strict sub-quotients of $X$ and $Y$, then $X' \otimes Y'$ is a strict sub-quotient of $X \otimes Y$, and it is $\Delta(\Qp)$-triangulable by proposition \ref{extriang}. Proposition \ref{tensirr}, applied to $(X')^*$ and $Y'$ so that $X' \otimes Y' = \Hom((X')^*,Y')$, tells us that $X'$ and $Y'$ are of rank $1$. 

Hence the $\BB^{\otimes F}_{\mid L}$-pairs $(F \otimes_E X)|_{G_L}$ and $(F \otimes_E Y)|_{G_L}$ are split triangulable.
\end{proof}

\begin{coro}
\label{betenstri}
If $X_e$ and $Y_e$ are two $\bee$-representations of $G_K$ whose tensor product is triangulable, with the rank $1$ sub-quotients extending to $\bee$-representations of $G_{\Qp}$, then $X_e$ and $Y_e$ are both potentially triangulable.
\end{coro}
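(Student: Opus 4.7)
The plan is to deduce the corollary by promoting the $\bee$-representations to $\BB^{\otimes E}_{\mid K}$-pairs and applying theorem \ref{maintheo}. First, using proposition \ref{drplat}, I choose $G_K$-stable $E \otimes_{\Qp} \bdr^+$-lattices $X_{\dr}^+$ and $Y_{\dr}^+$ inside $(E \otimes_{\Qp} \bdr) \otimes_{\bee} X_e$ and $(E \otimes_{\Qp} \bdr) \otimes_{\bee} Y_e$ respectively. This produces $\BB^{\otimes E}_{\mid K}$-pairs $X = (X_e, X_{\dr}^+)$ and $Y = (Y_e, Y_{\dr}^+)$ whose $\bee$-components recover the original $\bee$-representations.

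The next step is to check that $X \otimes Y$ is $\Delta(\Qp)$-triangulable as a $\BB^{\otimes E}_{\mid K}$-pair. By hypothesis, $X_e \otimes Y_e$ becomes split triangulable after extending scalars from $E$ to some finite extension $F/E$, and the rank $1$ subquotients of such a triangulation lie in $\Delta(\Qp)$ (i.e.\ extend to $\bee$-representations of $G_{\Qp}$). Applying corollary \ref{tribe} to the $\BB^{\otimes F}_{\mid K}$-pair $F \otimes_E (X \otimes Y)$ (whose $\bee$-component is $F \otimes_E (X_e \otimes Y_e)$), the triangulation of the $\bee$-component extends to a triangulation of the $B$-pair itself, and since the $\bee$-components of the rank $1$ subquotients come from $\Delta(\Qp)$, the $B$-pair $F \otimes_E (X \otimes Y)$ is split $\Delta(\Qp)$-triangulable. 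Hence $X \otimes Y$ is $\Delta(\Qp)$-triangulable in the sense of theorem \ref{maintheo}.

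Now theorem \ref{maintheo} applies and produces a finite extension $L/K$ together with a finite extension $F'/E$ such that $(F' \otimes_E X)|_{G_L}$ and $(F' \otimes_E Y)|_{G_L}$ are split triangulable as $\BB^{\otimes F'}_{\mid L}$-pairs. One last invocation of corollary \ref{tribe}, this time in the reverse direction, shows that the $\bee$-components $(F' \otimes_E X_e)|_{G_L}$ and $(F' \otimes_E Y_e)|_{G_L}$ are split triangulable as $\bee[F'/E]$-representations of $G_L$, so $X_e$ and $Y_e$ are potentially triangulable as $\bee$-representations. There is no real obstacle to overcome here: the whole argument is a bookkeeping exercise, and the only point that requires a moment of care is the bidirectional use of corollary \ref{tribe} to pass between triangulations of $B$-pairs and triangulations of their $\bee$-components, together with the observation that the set $\Delta(\Qp)$ used in theorem \ref{maintheo} is precisely defined in terms of the $\bee$-component.
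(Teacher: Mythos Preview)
Your proposal is correct and follows exactly the same route as the paper's own proof: extend $X_e$ and $Y_e$ to $\BB^{\otimes E}_{\mid K}$-pairs via proposition~\ref{drplat}, invoke corollary~\ref{tribe} to pass between triangulability of $B$-pairs and of their $\bee$-components, and then apply theorem~\ref{maintheo}. The paper compresses this into two sentences, but your expanded version faithfully spells out the same argument.
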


\begin{proof}
By proposition \ref{drplat}, $X_e$ and $Y_e$ extend to $\BB^{\otimes E}_{\mid K}$-pairs.  The result follows from corollary \ref{tribe} and theorem \ref{maintheo}.
\end{proof}

We finish with an example of a representation $V$ such that $V \otimes_E V$ is trianguline, but $V$ itself is not trianguline. This shows that the ``potentially'' in the statement of theorem A cannot be avoided. Let $Q_8$ denote the quaternion group. If $p \equiv 3 \bmod{4}$, there is a Galois extension $K/\Qp$ such that $\Gal(K/\Qp)=Q_8$ (see II.3.6 of \cite{JY88}). Choose such a $p$ and $K$, and let $E$ be a finite extension of $\Qp$ containing $\sqrt{-1}$. The group $Q_8$ has a (unique) irreducible $2$-dimensional $E$-linear representation, which we inflate to a representation $V$ of $G_{\Qp}$. One can check that $V\otimes_E V$ is a direct sum of characters, hence trianguline, and that the semi-linear representation $\Frac(\bee) \otimes_E V$ is irreducible. This holds for all $E$ as above, so that $V$ is not trianguline.

\vspace{10pt}

{\noindent\bf Acknowledgements:} We thank L\'eo Poyeton and Sandra Rozensztajn for their comments, and David Hansen and Andrea Conti for their questions and discussions.

\providecommand{\bysame}{\leavevmode\hbox to3em{\hrulefill}\thinspace}
\providecommand{\MR}{\relax\ifhmode\unskip\space\fi MR }
\providecommand{\MRhref}[2]{%
  \href{http://www.ams.org/mathscinet-getitem?mr=#1}{#2}}
\providecommand{\href}[2]{#2}

\end{document}